\newtheorem{lemma}{\bf Lemma}
\newtheorem{theorem}{\bf Theorem}
\newtheorem{corollary}{\bf Corollary}
\newtheorem{remark}{\bf Remark}
\newtheorem{proposition}{\bf Proposition}
\newenvironment{Proof of lemma}{\noindent{\it Proof of Lemma:}}{\hfill$\Box$}
\newenvironment{Proof of theorem}{\noindent{\bf Proof of Theorem}}{\hfill$\Box$}
\newenvironment{Proof of exercise}{\noindent{\it Proof of Exercise:}}{\hfill$\Box$}
\newenvironment{Acknowledgements}{\noindent{\bf Acknowledgements}}
\begin{document}

\title{Central Limit Theorems for Cavity and Local Fields of the Sherrington-Kirkpatrick Model}
\author{Wei-Kuo Chen\footnote{Department of Mathematics, University of California at Irvine, 340 Rowland Hall, Irvine, CA 92697-3875, USA, email: weikuoc@uci.edu}}

\maketitle

\begin{abstract}
One of the remarkable applications of the cavity method is to
prove the Thouless-Anderson-Palmer (TAP) system of equations in
the high temperature regime of the Sherrington-Kirkpatrick (SK)
model. This naturally leads us to the important study of the limit
laws for cavity and local fields. The first quantitative results
for both fields based on Stein's method were studied by Chatterjee
\cite{Chatt10}.
Although Stein's method provides us an efficient
approach for obtaining the limiting distributions, the nature of
this method restricts the derivation of optimal and general
results. In this paper, our study based on Gaussian interpolation
obtains the CLT for the cavity field. With the help of this result,
we conclude the CLT for local fields. In both cases, more refined moment estimates are given.
\end{abstract}

{Keywords: Gaussian integration by parts, Gaussian interpolation, Sherrington-Kirkpatrick model, Spin glass, Stein's method, TAP equations}

\section{Introduction and Main Results}
\subsection{The Sherrington-Kirkpatrick Model and TAP Equations}
Let $N$ be a positive integer. Consider the space of configurations $\Sigma_N=\left\{-1,+1\right\}^N.$ The elements $
\sigma=(\sigma_1,\ldots,\sigma_N)\in \Sigma_N$ are called {\it spin configurations}
and $\sigma_i$'s are called {\it spins}.
Suppose that $\mathbf{g}=\left\{g_{ij}\right\}_{1\leq i<j\leq N}$ are i.i.d. standard Gaussian random variables
with $g_{ij}=g_{ji},$ which is called the {\it disorder}. For a given realization of $\mathbf{g}$, we define the Hamiltonian $H_N$,
 with inverse temperature $\beta>0$ and external field $h\in\Bbb{R}$, by
\begin{align}\label{Intro:eq0}
-H_N(\sigma)=\frac{\beta}{\sqrt{N}}\sum_{i<j\leq N}g_{ij}\sigma_i\sigma_j+h\sum_{i\leq N}\sigma_i,
\end{align}
for $\sigma\in\Sigma_N$
and we then define the {\it Gibbs measure} $G_N$ on $\Sigma_N$ by
$$
G_N(\left\{\sigma\right\})=\frac{\exp\left(-H_N(\sigma)\right)}{Z_N},
$$
where $Z_N$ is the normalizing constant, called the {\it partition function}. The model we just defined here is
the so-called Sherrington-Kirkpatrick (SK) model \cite{SK75}
and our study in this paper will concentrate on the high temperature region only.

We use $\sigma^1,\sigma^2,$ etc. to denote configurations chosen independently from the Gibbs measure (with the same given disorder).
These are also called {\it replicas} in physics.
Given a function $f$ on $\Sigma_N^n=(\Sigma_N)^n$, the {\it quenched} average of $f$ on the product space $(\Sigma_N^n,G_N^{\otimes n})$ is defined as
$$
\left<f\right>=\sum_{\sigma^1,\ldots,\sigma^n}f(\sigma^1,\ldots,\sigma^n)G_N(\left\{\sigma^1\right\})\cdots G_N(\left\{\sigma^n\right\}).
$$

One of the main approaches to studying the SK model in the high temperature phase involves
the {\it overlap} of the configurations $\sigma^1,\sigma^2,$
$$
R_{1,2}=\frac{1}{N}\sum_{j\leq N}\sigma_j^1\sigma_j^2.
$$
It turns out that in the limit, as $N$ tends to infinity, this quantity will converge a.s. to a constant $q$, which is the unique solution to
\begin{align}\label{Intro:eq2}
q=E\tanh^2(\beta z\sqrt{q}+h),
\end{align}
where $z$ is a standard Gaussian random variable. More precisely,
concluding from Theorem 1.4.1 in Talagrand \cite{Talag10}, for fixed $\beta_0<1/2,$ we have
\begin{align}\label{Intro:eq1}
E\left<(R_{1,2}-q)^{2k}\right>\leq \frac{K}{N^k},
\end{align}
for every $\beta\leq \beta_0$ and $h$, where $K$ is a constant depending on $k$ and $\beta_0$ only.
By using replicas, many quantities that will be used in our study
can be controlled through $(\ref{Intro:eq1})$ (see Section 1.10 of \cite{Talag10}  for details): Let
$\dot{\sigma}_j^i=\sigma_j^i-\left<\sigma_j\right>.$ If we define
\begin{align}\label{Intro:eq4}
T_{i}=\frac{1}{N}\sum_{j\leq N}\dot{\sigma}_j^i\left<\sigma_j\right>,\quad T_{i,i}=\frac{1}{N}\sum_{j\leq N}(\dot{\sigma}^i_j)^2-(1-q),\quad T_{i,j}=\frac{1}{N}
\sum_{j'\leq N}\dot{\sigma}_{j'}^i\dot{\sigma}_{j'}^{j},\,\,i\neq j,
\end{align}
then, for fixed $\beta_0<1/2$ and $k\in\Bbb{N},$
there exists some $K$ depending on $\beta_0$ and $k$ only
such that for any $\beta\leq \beta_0$ and $h,$
\begin{align}\label{Intro:eq3}
\max_{1\leq i,j\leq N,i\neq j}
\left\{E\left<|T_i|^{2k}\right>,\,\,E\left<|T_{i,i}|^{2k}\right>,\,\,E\left<|T_{i,j}|^{2k}\right>\right\}\leq \frac{K}{N^k}.
\end{align}

Since each spin takes only two values, the Gibbs measure can be completely determined by the quenched averages
$\left<\sigma_1\right>,\ldots,\left<\sigma_N\right>.$
This observation provides us another main approach to studying SK model in the high temperature regime via the Thouless-Anderson-Palmer
(TAP) system of equations as outlined in \cite{Thoul77}:
$$
\left<\sigma_i\right>\thickapprox \tanh\left(\frac{\beta}{\sqrt{N}}\sum_{j\leq N,j\neq i}g_{ij}\left<\sigma_j\right>+h-\beta^2(1-q)\left<\sigma_i\right>\right),
\quad 1\leq i\leq N.
$$
Here $\thickapprox$ means that two quantities are approximately equal with high probability. The first rigorous proof of the validity of TAP equations
in the high temperature phase based on the remarkable formulation of the
{\it cavity} method
was established by Theorem 2.4.20
in Talagrand's book \cite{Talag03}. Later in the forthcoming edition \cite{Talag10} of \cite{Talag03}, Corollary 1.7.8 implies
that all $N$ equations hold simultaneously with high probability.

In view of the cavity method, the idea is to reduce the $N$-spin system to a smaller system by removing a spin. More precisely the procedure
can be described as follows. Recall formula $(\ref{Intro:eq0})$ for the Hamiltonian $H_N$ on $\Sigma_N$
with inverse temperature $\beta$ and external field $h.$ Let $1\leq i\leq N$ be a fixed site and write
$$
-H_N(\sigma)=-H_{N-1}(\rho)+\sigma_i\left(l_i+h\right),\quad\sigma=(\sigma_1,\ldots,\sigma_N),
$$
where $H_{N-1}$ is the Hamiltonian for the $(N-1)$-spin system defined by
\begin{align}\label{Intro:eq6}
\begin{split}
-H_{N-1}(\rho)=&\frac{\beta_-}{\sqrt{N-1}}\sum_{i'<j'\leq N,i',j'\neq i}g_{i'j'}\sigma_{i'}\sigma_{j'}+h\sum_{i'\leq N,i'\neq i}\sigma_{i'},
\end{split}
\end{align}
for $\rho=(\sigma_1,\ldots,\sigma_{i-1},\sigma_{i+1},\ldots,\sigma_{N})\in\Sigma_{N-1}$ and $\beta_-=\sqrt{(N-1)/N}\beta,$
and $l_i$ is defined as
\begin{align}\label{Intro:eq7}
l_i=&\frac{1}{\sqrt{N}}\sum_{j\leq N,j\neq i}g_{ij}\sigma_j.
\end{align}
Here, for convenience, the dependence of $H_{N-1}$ on the $i$-th spin remains implicit and we
use $\left<\cdot\right>_-$ to denote the quenched average induced by $H_{N-1}$.
Then the following fundamental identity holds
\begin{align}\label{Intro:eq5}
\left<\sigma_i\right>=\frac{\left<\sinh (\beta l_i+h)\right>_-}{\left<\cosh
(\beta l_i+h)\right>_-}=\left<\tanh(\beta l_i+h)\right>.
\end{align}
Under $\left<\cdot\right>_-$, we call $l_i$ the {\it cavity} field (slightly different from the cavity field defined in Section \ref{LLCF})
and under $\left<\cdot\right>$, we call $l_i$ the {\it local} field.
Therefore, to prove the TAP equations, we are led to the important study of the limit laws for cavity and local fields.
The key observation to establish the limit law for the cavity field is that
from the definition of $l_i,$ $\left\{g_{ij}\right\}_{j\leq N,j\neq i}$ is independent of the randomness of $\left<\cdot\right>_-,$
which motivates our approach by using Gaussian interpolation on the cavity field.
Consequently, we then deduce the limit law for the local field. In both cases, the quantitative results
for the moment estimates are given and will be stated in the following section.


\subsection{Main Results}\label{MainResults}

In the rest of the paper $\phi_{\mu,\sigma^2}$ stands for the Gaussian density with mean $\mu$ and variance $\sigma^2.$
Suppose $U:\Bbb{R}^d\rightarrow\Bbb{R}$ is continuous. We say
that $U$ is of moderate growth if
$\lim_{\|\mathbf{x}\|\rightarrow\infty}U(\mathbf{x})\exp\left(-a\|\mathbf{x}\|^2\right)=0$ for every $a>0.$


\subsubsection{Limit Law for the Cavity Field}\label{LLCF}

Suppose $g_1,\ldots,g_N$ are i.i.d. standard Gaussian r.v.s independent of the randomness of $\left\{g_{ij}\right\}_{i<j\leq N}$. Define the {\it cavity field}
$l$ by
\begin{equation}\label{thm1:prep:eq1}
l=\frac{1}{\sqrt{N}}\sum_{j=1}^Ng_j\sigma_j.
\end{equation}
The name "cavity field" is due to the important role of $l$ played in the cavity method as we have already seen in Section $1.1.$
Note that the quenched average of $l$ is
\begin{equation}\label{thm1:prep:eq2}
r=\left<l\right>=\frac{1}{\sqrt{N}}\sum_{j=1}^Ng_j\left<\sigma_j\right>.
\end{equation}
The limit law of the centered distribution $l-r$ under the Gibbs
measure was firstly studied by Talagrand and it is approximately a centered Gaussian distribution with variance $1-q$.
The exact quantitative result is given by Theorem 1.7.11 in Talagrand \cite{Talag10}.

\begin{theorem}\label{thm0}$\left[4,\mbox{Theorem 1.7.11}\right]$
Let $\beta_0<1/2$ and $k\in\Bbb{N}.$ Suppose that $U$ is an infinitely differentiable function defined on $\Bbb{R}$ and
the derivatives of all orders of $U$ are of moderate growth.
Then for any $\beta\leq\beta_0$ and $h$, we have
\begin{align*}
E\left[\left<U\left(l-r\right)\right>-\int_{\Bbb{R}}U(x)\phi_{0,1-q}(x)dx\right]^{2k}\leq \frac{K}{N^k},
\end{align*}
where $K$ is a constant depending on $k,U,$ and $\beta_0$ only.

\end{theorem}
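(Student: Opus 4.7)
The plan is to use Gaussian interpolation on the replicated cavity field. Introduce $2k$ replicas $\sigma^1,\dots,\sigma^{2k}$ drawn from the Gibbs measure and independent standard Gaussians $z_1,\dots,z_{2k}$ that are independent of everything else. For $\ell\leq 2k$, let $l^\ell=N^{-1/2}\sum_{j\leq N} g_j\sigma_j^\ell$ and, for $t\in[0,1]$, set
\[
X_t^\ell=\sqrt{t}\,(l^\ell-r)+\sqrt{(1-t)(1-q)}\,z_\ell.
\]
Writing $F^{*}=\int U(x)\,\phi_{0,1-q}(x)\,dx$ and $V(x_1,\dots,x_{2k})=\prod_{\ell=1}^{2k}(U(x_\ell)-F^{*})$, define $\varphi(t)=E\left<V(X_t^1,\dots,X_t^{2k})\right>$, where the quenched average is taken on $\Sigma_N^{2k}$. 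By the replica identity, $\varphi(1)=E[\left<U(l-r)\right>-F^{*}]^{2k}$, while $\varphi(0)=0$ because the $z_\ell$ are independent and $E[U(\sqrt{1-q}\,z_\ell)-F^{*}]=0$. The task therefore reduces to estimating $\int_0^1\varphi'(t)\,dt$.

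Next I would compute $\varphi'(t)$ by Gaussian integration by parts separately on the $z_\ell$'s and on the $g_j$'s, which is valid since the latter are independent of both the Gibbs randomness and the $z_\ell$'s. The $z_\ell$-IBP contributes exactly $-\tfrac{1-q}{2}\sum_\ell E\left<\partial^2_{\ell\ell}V(X_t)\right>$, while the $g_j$-IBP produces the quadratic forms $N^{-1}\sum_j\dot\sigma_j^\ell\dot\sigma_j^{\ell'}$; by the definitions in $(\ref{Intro:eq4})$ these equal $T_{\ell,\ell}+(1-q)$ on the diagonal and $T_{\ell,\ell'}$ off the diagonal. The diagonal $(1-q)$ pieces cancel the $z$-IBP term precisely, leaving the noise
\[
\varphi'(t)=\tfrac{1}{2}\sum_{\ell=1}^{2k} E\left<\partial^2_{\ell\ell}V(X_t)\,T_{\ell,\ell}\right>+\tfrac{1}{2}\sum_{\ell\neq\ell'}E\left<\partial^2_{\ell\ell'}V(X_t)\,T_{\ell,\ell'}\right>.
\]
A single Cauchy--Schwarz against $(\ref{Intro:eq3})$ only yields $|\varphi'(t)|=O(N^{-1/2})$, which is far from the target $O(N^{-k})$, so the argument must be iterated.

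The key observation is that $\partial^2_{\ell\ell'}V$ still contains a product of $2k-2$ factors of the form $U(X_t^{\ell''})-F^{*}$, each of which has vanishing Gaussian expectation. Applying the same interpolation-plus-IBP scheme to those remaining factors produces one additional $T_{\cdot,\cdot}$-factor at each round. I would organize this as an induction on $k$ (trivial at $k=0$), using the inductive hypothesis on the $(2k-2)$-th moment to control the product of $2k-2$ surviving factors, while H\"older's inequality absorbs the extra $T$ at the cost of $O(N^{-1/2})$; squaring gives a net gain of $N^{-1}$ per inductive step, producing the correct rate $N^{-k}$.

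The main obstacle is the combinatorial bookkeeping of the iteration together with uniform-in-$(t,N)$ control of moments of $\partial^\alpha V(X_t)$. This is where the moderate-growth hypothesis on $U$ and its derivatives is essential: since $X_t^\ell$ is a linear combination of independent Gaussians with coefficients bounded (by $|\sigma_j^\ell|=1$) plus an independent Gaussian, it has subgaussian tails uniform in $N$ and $t\in[0,1]$, so $E\left<|\partial^\alpha V(X_t)|^p\right>$ is finite and bounded for every $p$. Combining this with the $L^p$ estimates for the $T$'s from $(\ref{Intro:eq3})$ via H\"older yields the desired bound $K/N^k$, with $K$ depending on $k$, $U$, and $\beta_0$ only.
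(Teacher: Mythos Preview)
The paper does not itself prove Theorem~\ref{thm0}; it is quoted from Talagrand. The relevant comparison is with the paper's proof of Theorem~\ref{thm1}, which it explicitly says is ``based on the same idea as the proof of Theorem~\ref{thm0}.'' Your interpolation $X_t^\ell$ and the formula you obtain for $\varphi'(t)$ are correct and coincide with Lemma~\ref{lem4} restricted to the one–variable case (no dependence on $r$, hence no $T_i$ terms and no $1/\sqrt t$ singularity). So far the approaches agree.

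The gap is in how you propose to close the argument. The plan ``induction on $k$, use the $(2k-2)$-th moment on the surviving factors, absorb $T$ by H\"older'' does not go through: after H\"older, the $2k-2$ surviving factors acquire an exponent $>1$, and you are forced to control either $E|W_t|^{2k}$ (circular) or $E|W_t|^{4k-4}$ (a moment of order $\geq 2k$ once $k\geq2$, so not covered by induction). One derivative therefore buys only $N^{-1/2}$, and the induction as written yields $N^{-k/2}$ rather than $N^{-k}$. The actual mechanism, which is what the paper exploits in the proof of Theorem~\ref{thm1}, is to iterate the IBP $2k$ times and use two vanishing facts at $t=0$: (a) any factor $V(\xi^i)$ that has not been differentiated has mean zero; and crucially (b) if a replica index $i$ appears \emph{exactly once} among the accumulated $T$--subscripts, then $\langle\prod T\rangle=0$ because $\langle\dot\sigma^i_j\rangle_i=0$ and at $t=0$ the factor $E_0[\cdots]$ no longer depends on replica $i$. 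Combining (a) and (b) forces $k_4(i)\geq 2$ for every $i$, so the boundary terms $\psi_{\mathbf{s}_n}(0)$ vanish for all $n<2k$, and the first nonzero contribution carries $2k$ factors of $T$, hence the rate $N^{-k}$. Observation~(b) is absent from your sketch and is exactly what upgrades $N^{-k/2}$ to $N^{-k}$.
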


Note that here and the Theorem $\ref{thm1}$ and $\ref{thm2}$
below, we put strong condition on $U$ for convenience. In fact, as
we can see, from the proofs of Theorem $\ref{thm0}$, $\ref{thm1},$
and $\ref{thm2},$ this strong condition can be easily released.

By applying Theorem $\ref{thm0}$, Talagrand proved the TAP equations, see Theorem $2.4.20$ in \cite{Talag03} and
Theorem $1.7.7$ in \cite{Talag10}. However, his argument relies heavily on the special property of the exponential function
and it seems impossible to deduce the limit law for local fields from Theorem $\ref{thm0}.$ To overcome this difficulty, it would be very helpful
if we have good quantitative results for the limit law of $l,$ which is also one of the research problems
proposed by Talagrand (\cite{Talag10}, Research Problem $1.7.12$).
In our study we prove that the limit of $l$ is still concentrative and
under the Gibbs measure, $l$ is approximately Gaussian with mean $r$ and variance $1-q.$
Our main quantitative result is stated as follows.

\begin{theorem}\label{thm1}
Let $\beta_0<1/2$ and $k\in\Bbb{N}.$ Suppose that $U$ is an infinitely differentiable function defined on $\Bbb{R}$ and the derivatives of all orders
of $U$ are of moderate growth.
Then for any $\beta\leq\beta_0$ and $h,$ we have
\begin{equation}\label{thm1:eq1}
\begin{split}
&E\left[\left<U\left(l\right)\right>-\int_{\Bbb{R}}U(x)\phi_{r,1-q}(x)dx\right]^{2k}\leq \frac{K}{N^k},
\end{split}
\end{equation}
where $K$ is a constant depending on $k,U,$ and $\beta_0$ only.
\end{theorem}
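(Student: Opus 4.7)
The strategy is to mimic the Gaussian--interpolation proof of Theorem~\ref{thm0}, but to carry the random mean $r=\left<l\right>$ explicitly through the argument. The crucial structural observation is that $r$ is a function of $\{g_{ij}\}$ and $\{g_j\}$ alone and so behaves as a constant under $\left<\cdot\right>$; in particular $\left<U(l)\right>=\left<U(r+(l-r))\right>$ and $\left<l-r\right>=0$.

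Let $z$ be a standard Gaussian random variable independent of all other randomness, and let $E_z$ denote expectation with respect to $z$. For $t\in[0,1]$ define
\[
\ell(t):=r+\sqrt{t}\,(l-r)+\sqrt{(1-t)(1-q)}\,z,\qquad \varphi(t):=E_z\!\left<U(\ell(t))\right>.
\]
At the endpoints $\varphi(1)=\left<U(l)\right>$ and, since $\ell(0)=r+\sqrt{1-q}\,z$ is $\sigma$-free,
\[
\varphi(0)=E_z\,U(r+\sqrt{1-q}\,z)=\int_{\Bbb R}U(y)\phi_{r,1-q}(y)\,dy.
\]
Hence the quantity inside the brackets of (\ref{thm1:eq1}) equals $\int_0^1\varphi'(t)\,dt$, and by Jensen's inequality it suffices to show $E\,|\varphi'(t)|^{2k}\le K/N^k$ uniformly in $t\in(0,1)$.

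Differentiating and applying Gaussian integration by parts in $z$ (using $\partial_z\ell(t)=\sqrt{(1-t)(1-q)}$) gives
\[
\varphi'(t)=\frac{1}{2\sqrt t}E_z\!\left<U'(\ell(t))(l-r)\right>-\frac{1-q}{2}\,E_z\!\left<U''(\ell(t))\right>.
\]
To evaluate the moments of $\varphi'(t)$, expand $l-r=N^{-1/2}\sum_j g_j\dot\sigma_j$ and, when raising $\varphi'(t)$ to the $2k$-th power and integrating against the disorder, perform iterated Gaussian integration by parts in each $g_j$ (using $\partial_{g_j}\ell(t)=(\left<\sigma_j\right>+\sqrt t\,\dot\sigma_j)/\sqrt N$ and introducing fresh replicas to turn products of independent brackets into multi-replica quenched averages). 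The iteration is designed so that the constant contributions produced by these IBPs exactly cancel the $-\tfrac{1-q}{2}E_z\!\left<U''(\ell(t))\right>$ pieces that appear in $\varphi'(t)^{2k}$---this cancellation is the structural expression of the fact that the conditional variance of $l-r$ under $\left<\cdot\right>$ concentrates at $1-q$---and only remainders with prefactors of the form $R_{1,2}-q$, $T_i$, $T_{i,i}$ or $T_{i,j}$ from (\ref{Intro:eq4}) survive. Each such prefactor has $L^{2m}$-norm $O(N^{-1/2})$ for every $m$ by (\ref{Intro:eq1})--(\ref{Intro:eq3}), while the moderate-growth hypothesis on $U$ combined with the sub-Gaussianity of $r$ (conditional variance $\le 1$) keeps $E_z\!\left<|U^{(m)}(\ell(t))|^p\right>$ uniformly bounded in $N$, $t$, and $p$. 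A Cauchy--Schwarz step then produces $E\,|\varphi'(t)|^{2k}\le K/N^k$. The apparent $1/\sqrt t$ singularity near $t=0$ is harmless because $\left<U'(\ell(0))(l-r)\right>=0$ (as $U'(\ell(0))$ is $\sigma$-free and $\left<l-r\right>=0$), forcing $E_z\!\left<U'(\ell(t))(l-r)\right>=O(\sqrt t)$ as $t\downarrow 0$.

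The chief difficulty is the bookkeeping in the iterated Gaussian IBP in the $g_j$'s: one must verify that, modulo errors absorbed into (\ref{Intro:eq1})--(\ref{Intro:eq3}), the leading constant contributions combine precisely to cancel the variance term $-\tfrac{1-q}{2}E_z\!\left<U''(\ell(t))\right>$ so that only the controlled remainders remain. A secondary technical point is the propagation of the random shift $r$ through the arguments of the high-order derivatives of $U$, which is handled by combining the moderate-growth property of $U$ with Gaussian moment bounds on $r$.
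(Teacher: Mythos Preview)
Your interpolation is the same one the paper uses, and the heuristic that the variance term $-\frac{1-q}{2}E_z\langle U''(\ell(t))\rangle$ should cancel against the leading piece of the $(l-r)$-term is exactly right. The gap is in the reduction step ``by Jensen it suffices to show $E|\varphi'(t)|^{2k}\le K/N^k$ uniformly in $t$'': this uniform bound is not established, and in fact the structure of the IBP suggests it fails near $t=0$.

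Here is the issue. When you integrate by parts in $g_j$ on a factor $E_z\langle U'(\ell(t))(l-r)\rangle$ inside $E[\varphi'(t)^{2k}]$, the derivative $\partial_{g_j}\ell(t)=N^{-1/2}(\langle\sigma_j\rangle+\sqrt{t}\,\dot\sigma_j)$ has two parts. The $\sqrt{t}\,\dot\sigma_j$ piece sums to $\sqrt t\,(T_{i,i}+1-q)$ or $\sqrt t\,T_{i,j}$, and after the $1/(2\sqrt t)$ prefactor it is what produces the cancellation you want plus an $N^{-1/2}$ remainder. But the $\langle\sigma_j\rangle$ piece sums to $T_i$ with \emph{no} factor of $\sqrt t$, so it contributes a term of the form $\frac{1}{2\sqrt t}\,T_i\cdot(\text{bounded})$. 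When several such factors are present in the $2k$-fold product you obtain contributions of size $t^{-m/2}N^{-k}$ for various $m\ge 1$, which blow up as $t\downarrow 0$. Your ``harmless'' remark that $A(0)=0$ forces $A(t)=O(\sqrt t)$ only shows $\varphi'(t)$ stays bounded near $0$; it does not give the required $N^{-1/2}$ smallness there, and it does not neutralise the $t^{-m/2}$ factors that appear after expanding the $2k$-th power.

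The paper avoids this by never seeking a uniform-in-$t$ moment bound on a first derivative. Instead it interpolates the $2k$-th moment itself, writing
\[
\psi(t)=E\Big\langle\prod_{i\le 2k}V(u_i(t),r)\Big\rangle,\qquad V(x,y)=U(x+y)-E_\xi U(\xi+y),
\]
with $2k$ independent auxiliary Gaussians $\xi^i$ and with $r$ carried as a \emph{second argument} $y$. Gaussian IBP (Lemma~\ref{lem4}) then gives $\psi'(t)=\tfrac12\psi_{(0)}(t)+\tfrac{1}{2\sqrt t}\psi_{(1)}(t)$, where the $1/\sqrt t$ terms are precisely those carrying $T_i$. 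This is iterated $2k$ times, producing nested integrals $\int_0^t\!\cdots\!\int_0^{t_{m-1}}\prod_n t_n^{-s_n/2}\,dt_m\cdots dt_1$, which are finite because each $t_n^{-1/2}$ is integrated separately (Lemma~\ref{lem:combin}). The second crucial ingredient is the zero-mean property $E_{\xi^i}[\partial_y^h V(\xi^i,r)]=0$: it forces $\psi_{\mathbf{s}_n}(0)=0$ for all $n<2k$, so the Taylor expansion starts only at order $2k$, and each of the $2k$ surviving $T$-factors contributes one power of $N^{-1/2}$. Neither of these two mechanisms --- the integrable (rather than pointwise) handling of $1/\sqrt t$, and the systematic vanishing at $t=0$ coming from the centred $V$ --- is present in your outline, and both are needed to close the argument.
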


In \cite{Talag03}, p. 87 and also \cite{Talag10}, Section 1.5, Talagrand gave intuitive arguments to support that the limit law of
the cavity field is approximately a Gaussian distribution with center $r$ and variance $1-q.$ Indeed, he proved that
when $k=1,$ the left-hand side of $(\ref{thm1:eq1})$
should be small without any error bound.
Later on Chatterjee \cite{Chatt10} based on Stein's method obtained the first quantitative result that
when $k=1$ and $U$ is a bounded measurable function $U$, the left-hand side of $(\ref{thm1:eq1})$ has an error bound
$c(\beta_0)\|U\|_\infty/\sqrt{N},$
where $c(\beta_0)$ is a constant depending on $\beta_0$ only. Hence, Theorem $\ref{thm1}$ justifies Talagrand's conjecture and
also improves Chatterjee's error bound if suitable smoothness on $U$ is assumed.
Informatively, as we will see, Theorem $\ref{thm1}$ is the bridge to study the limit law for local fields,
which is the main advantage that Theorem $\ref{thm0}$ does not have.


\subsubsection{Limit Law for the Local Fields}

Recall that the local field $l_i$ at site $i$ is defined by formula $(\ref{Intro:eq7}).$
For $1\leq i\leq N,$ suppose that $\nu_i$ is a random measure, whose density is the mixture of two Gaussian densities
$$
p_i\phi_{\gamma_i+\beta(1-q),1-q}+(1-p_i)\phi_{\gamma_i-\beta(1-q),1-q},
$$
where
\begin{align}\label{thm2:prep:eq1}
\gamma_i=&\frac{1}{\sqrt{N}}\sum_{j\leq N,j\neq i}g_{ij}\left<\sigma_j\right>-\beta(1-q)\left<\sigma_i\right>,\\\notag
\\\notag
p_i=&\frac{e^{\beta\gamma_i+h}}{e^{\beta \gamma_i+h}+e^{-\beta \gamma_i-h}}.\notag
\end{align}
Then we prove that the local field $l_i$ under the Gibbs measure is close to $\nu_i$ in the following sense:

\begin{theorem}\label{thm2}
Let $\beta_0<1/2$ and $k\in\Bbb{N}.$ Suppose that $U$ is an infinitely differentiable function defined on $\Bbb{R}$
and the derivatives of all orders of $U$ are of moderate growth.
Then for any $1\leq i\leq N$, $\beta\leq \beta_0$, and $h,$
\begin{align}\label{thm2:eq1}
E\left[\left<U(l_i)\right>-\int U(x)\nu_i(dx)\right]^{2k}\leq \frac{K}{N^k},
\end{align}
where $K$ is a constant depending on $\beta_0,k,$ and $U$ only.
\end{theorem}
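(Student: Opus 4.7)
The plan is to reduce Theorem~\ref{thm2} to Theorem~\ref{thm1} applied to the $(N-1)$-spin subsystem obtained by cavitating site $i$, and then to identify the resulting centering via a TAP/Onsager comparison. Since $l_i$ does not depend on $\sigma_i$, the cavity identity immediately gives
$$\left<U(l_i)\right> = \frac{\left<U(l_i)\cosh(\beta l_i + h)\right>_-}{\left<\cosh(\beta l_i + h)\right>_-},$$
and I would split $\cosh = \frac{1}{2}(e^{\beta l_i + h} + e^{-\beta l_i - h})$ to work with pure exponentials. Because $\{g_{ij}\}_{j\ne i}$ is independent of the disorder driving $\left<\cdot\right>_-$, Theorem~\ref{thm1} applies to the $(N-1)$-spin Gibbs measure with these Gaussians and the moderate-growth test functions $e^{\pm(\beta x + h)}$ and $U(x)e^{\pm(\beta x + h)}$, yielding $\left<V(l_i)\right>_- = \int V\,\phi_{r_i^-,1-q}\,dx + \varepsilon_V$ with $E|\varepsilon_V|^{2k}\le K/N^k$, where $r_i^- := \frac{1}{\sqrt N}\sum_{j\ne i}g_{ij}\left<\sigma_j\right>_-$. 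The mismatch between $q$ in the $N$-spin model and its $(N-1)$-spin analogue is $O(1/N)$ and is absorbed into $K$.

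Completing the square in the Gaussian integrals turns each $\int e^{\epsilon(\beta x + h)}\phi_{r_i^-,1-q}(x)\,dx$ into $e^{\epsilon(\beta r_i^- + h) + \beta^2(1-q)/2}$, and each $\int U(x)e^{\epsilon(\beta x + h)}\phi_{r_i^-,1-q}(x)\,dx$ into the same prefactor times $\int U(y)\phi_{r_i^- + \epsilon\beta(1-q),\,1-q}(y)\,dy$. After cancellation of the prefactors in the ratio, this produces
$$\left<U(l_i)\right> = \tilde p_i\!\int U\,\phi_{r_i^-+\beta(1-q),\,1-q} + (1-\tilde p_i)\!\int U\,\phi_{r_i^--\beta(1-q),\,1-q} + \tilde\varepsilon,$$
with $\tilde p_i = e^{\beta r_i^- + h}/(e^{\beta r_i^- + h} + e^{-\beta r_i^- - h})$ and $E|\tilde\varepsilon|^{2k}\le K/N^k$. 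This is exactly $\int U\,d\tilde\nu_i$ for the mixture defined like $\nu_i$ but with $r_i^-$ in place of $\gamma_i$.

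It remains to establish the TAP/Onsager comparison $E|r_i^- - \gamma_i|^{2k}\le K/N^k$, equivalently
$$E\Bigl|\tfrac{1}{\sqrt N}\!\sum_{j\ne i}g_{ij}\bigl(\left<\sigma_j\right> - \left<\sigma_j\right>_-\bigr) - \beta(1-q)\left<\sigma_i\right>\Bigr|^{2k} \le \frac{K}{N^k}.$$
Writing $\left<\sigma_j\right> - \left<\sigma_j\right>_- = \left<\dot\sigma_j\cosh(\beta l_i + h)\right>_-/\left<\cosh(\beta l_i + h)\right>_-$ with $\dot\sigma_j = \sigma_j - \left<\sigma_j\right>_-$, and expanding the numerator in $\beta l_i$, the leading covariance $\left<\dot\sigma_j l_i\right>_-$ isolates the diagonal piece $\frac{g_{ij}}{\sqrt N}(1 - \left<\sigma_j\right>_-^2)$ up to off-diagonal fluctuations controlled by the $T_{j,k}$-estimates in (\ref{Intro:eq3}). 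Summing against $g_{ij}/\sqrt N$, the diagonal contribution produces $\beta(1-q)\left<\sigma_i\right> + O_{L^{2k}}(N^{-1/2})$ by (\ref{Intro:eq1}) and concentration of $\tfrac{1}{N}\sum_{j\ne i}\left<\sigma_j\right>_-^2$ at $q$. Once the comparison is established, Lipschitz dependence of $\tilde p_i$ and of the mixture means on $r_i^-$ gives $|\int U\,d\tilde\nu_i - \int U\,d\nu_i| \le C_U|r_i^- - \gamma_i|$, and the $L^{2k}$ triangle inequality closes the proof.

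The hard part will be the Onsager comparison: while in expectation a single Gaussian integration by parts suffices, attaining the sharp $L^{2k}$ rate $N^{-k}$ requires controlling every second-order term in the cavity expansion of $\left<\sigma_j\right> - \left<\sigma_j\right>_-$, including the off-diagonal part of $\left<\dot\sigma_j l_i\right>_-$ and the fluctuation of the normalizer $\left<\cosh(\beta l_i + h)\right>_-$ about its Gaussian integral value, and showing all remainders are $O_{L^{2k}}(N^{-1/2})$ uniformly in $i$. This is precisely the step where the full strength of the replica moment bounds (\ref{Intro:eq3}) and the independence of $\{g_{ij}\}_{j\ne i}$ from $\left<\cdot\right>_-$ are indispensable.
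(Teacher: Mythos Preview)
Your approach is essentially the same as the paper's: both reduce $\left<U(l_i)\right>$ via the cavity identity to a ratio over $\left<\cdot\right>_-$, apply Theorem~\ref{thm1} to the $(N-1)$-spin system (the paper packages this step as Proposition~\ref{prop1}), complete the square to produce the Gaussian mixture centered at $r_i^-$, and then replace $r_i^-$ by $\gamma_i$.

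The one substantive difference is in how the Onsager comparison $E|r_i^- - \gamma_i|^{2k}\le K/N^k$ is obtained. The paper does \emph{not} prove it here: Lemma~\ref{lem5} simply quotes it from the inequalities on p.~86 of \cite{Talag10} (essentially a byproduct of Talagrand's proof of the TAP equations). Your direct argument---expanding $\left<\sigma_j\right>-\left<\sigma_j\right>_-$ in $\beta l_i$ and extracting the diagonal $\frac{g_{ij}}{\sqrt N}(1-\left<\sigma_j\right>_-^2)$---is morally correct and is in fact how Talagrand derives that very estimate, but as you yourself note, making the remainder $O_{L^{2k}}(N^{-1/2})$ rigorous requires controlling the off-diagonal covariances and the fluctuation of the normalizer; this is genuine work you would have to carry out. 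The paper also keeps careful track of the $\beta_-$ versus $\beta$ and $q_-$ versus $q$ discrepancies (Steps~4 and~5), which you absorb into $K$ somewhat informally; these are routine but should be spelled out since the $(N-1)$-system is at temperature $\beta_-$ with limiting overlap $q_-$.
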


Again, by applying Stein's method, Chatterjee \cite{Chatt10} proved the first quantitative result regarding the limit law for the local fields
that when $k=1$ and $U$ is a bounded measurable function $U,$ the left-hand side of $(\ref{thm2:eq1})$ has an error bound
$c(\beta_0)\|U\|_\infty /\sqrt{N}$,
where $c(\beta_0)$ is a constant depending on $\beta_0$ only. Thus, Theorem $\ref{thm2}$ improves this error bound
if we require some smoothness on $U.$ Recall formula $(\ref{Intro:eq5}).$ If
we put $U(x)=\tanh(\beta x+h)$ and modify the proof for Theorem $\ref{thm2}$ slightly,
then we obtain the same quantitative
result for the TAP equations in Talagrand $\left[4,\mbox{Theorem $1.7.7$}\right]:$

\begin{corollary}\label{cor2}
Let $\beta_0<1/2$ and $k\in\Bbb{N}.$ Then for each $1\leq i\leq N$, any $\beta\leq \beta_0,$ and $h,$ we have
$$
E\left[\left<\sigma_i\right>-\tanh\left(\frac{\beta}{\sqrt{N}}\sum_{j\leq N,j\neq i}g_{ij}\left<\sigma_j\right>+h-\beta^2(1-q)\left<\sigma_i\right>
\right)\right]^{2k}\leq \frac{K}{N^k},
$$
where $K$ is a constant depending on $\beta_0,$ and $k$ only.
\end{corollary}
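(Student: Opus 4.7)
The plan is to apply Theorem~\ref{thm2} directly with the specific choice $U(x)=\tanh(\beta x+h)$ and then to evaluate the integral $\int U\,d\nu_i$ in closed form, showing that it collapses exactly to $\tanh(\beta\gamma_i+h)$. The ``slight modification'' of the proof of Theorem~\ref{thm2} alluded to in the statement is precisely this final closed-form evaluation.

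Since $\tanh$ and all its derivatives are bounded, $U(x)=\tanh(\beta x+h)$ satisfies the hypotheses of Theorem~\ref{thm2}. Using identity (\ref{Intro:eq5}), namely $\left<\sigma_i\right>=\left<U(l_i)\right>$, Theorem~\ref{thm2} yields
\[
E\Bigl[\left<\sigma_i\right>-\int_{\Bbb{R}}\tanh(\beta x+h)\,\nu_i(dx)\Bigr]^{2k}\leq \frac{K}{N^k}.
\]

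Next I would rewrite the mixture density of $\nu_i$ in a more convenient form. Set $b=\beta^2(1-q)$. Expanding $(x-\gamma_i\mp\beta(1-q))^2$ in the two Gaussian exponents and substituting the definition of $p_i$ from (\ref{thm2:prep:eq1}), a direct algebraic manipulation yields
\[
p_i\,\phi_{\gamma_i+\beta(1-q),\,1-q}(x)+(1-p_i)\,\phi_{\gamma_i-\beta(1-q),\,1-q}(x)=\frac{\cosh(\beta x+h)}{\cosh(\beta\gamma_i+h)}\,e^{-b/2}\,\phi_{\gamma_i,\,1-q}(x).
\]
Multiplying through by $\tanh(\beta x+h)=\sinh(\beta x+h)/\cosh(\beta x+h)$, the two $\cosh$ factors cancel; combined with the elementary Gaussian moment $\int\sinh(\beta x+h)\,\phi_{\gamma_i,1-q}(x)\,dx=e^{b/2}\sinh(\beta\gamma_i+h)$, this produces the exact identity
\[
\int_{\Bbb{R}}\tanh(\beta x+h)\,\nu_i(dx)=\tanh(\beta\gamma_i+h).
\]

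Recognising from (\ref{thm2:prep:eq1}) that $\beta\gamma_i+h=\frac{\beta}{\sqrt N}\sum_{j\neq i}g_{ij}\left<\sigma_j\right>+h-\beta^2(1-q)\left<\sigma_i\right>$ is exactly the argument of the $\tanh$ appearing in the statement, combining the two displays above immediately delivers the corollary. I do not anticipate any serious obstacle: all the genuinely analytical work is already packaged inside Theorem~\ref{thm2}, and the present step is the algebraic observation that the mixture $\nu_i$ has been designed precisely so that its $\tanh$-integral collapses to $\tanh$ of the TAP effective field. If one wanted to avoid invoking Theorem~\ref{thm2} as a black box, the same mixture-density identity allows the proof of Theorem~\ref{thm2}, specialised to $U=\tanh(\beta\cdot+h)$, to be routed through $\sinh$ and $\cosh$ via (\ref{Intro:eq5})---which is presumably what the phrase ``modify the proof slightly'' refers to.
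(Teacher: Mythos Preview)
Your proof is correct and follows the same overall strategy as the paper: apply Theorem~\ref{thm2} with $U(x)=\tanh(\beta x+h)$, then evaluate $\int\tanh(\beta x+h)\,\nu_i(dx)$ in closed form. The algebraic evaluations differ slightly. The paper quotes a ready-made identity (equation~(9) of \cite{Chatt10}) for $E[\tanh(aX+b)]$ when $X$ has a two-component Gaussian mixture law, checks that $a=\beta$, $b=h$ here, and reads off the answer. You instead rewrite the mixture density as $\cosh(\beta x+h)\,e^{-b/2}\phi_{\gamma_i,1-q}(x)/\cosh(\beta\gamma_i+h)$ and let the $\cosh$ cancel against the $\tanh$---a direct and self-contained computation. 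Both routes give the same identity $\int\tanh(\beta x+h)\,\nu_i(dx)=\tanh(\beta\gamma_i+h)$; yours has the advantage of not depending on an external reference, while the paper's is shorter once the cited formula is granted.

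One clarification: your reading of the phrase ``modify the proof for Theorem~\ref{thm2} slightly'' is not quite what the paper intends. The paper is not referring to the closed-form evaluation of the integral; it is pointing out that the constant $K$ in Theorem~\ref{thm2} a priori depends on $U$, and since $U=\tanh(\beta\cdot+h)$ depends on $h$, one must revisit the proof to see that the dependence on $U$ enters only through sup-norm bounds on its derivatives---bounds that are uniform in $h$ because $\tanh$ is bounded with bounded derivatives. You do allude to this (``$\tanh$ and all its derivatives are bounded''), but it would be worth stating explicitly that this uniformity is what guarantees $K$ depends only on $\beta_0$ and $k$, as the corollary claims.
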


\begin{proof}
Let us notice a useful formula from \cite{Chatt10}, equation $(9),$ which is stated as follows.
Suppose that $X$ is a random variable, whose density is the mixture of two Gaussian densities:
$p\phi_{\mu_1,\sigma^2}+(1-p)\phi_{\mu_2,\sigma^2}$ with $\mu_1>\mu_2$, $\sigma>0,$ and $0<p<1.$ Set
$$
a=\frac{\mu_1-\mu_2}{2\sigma^2},\,\,b=\frac{1}{2}\log\frac{p}{1-p}-\frac{\mu_1^2-\mu_2^2}{4\sigma^2}.
$$
Then a straightforward computation yields
\begin{align}\label{cor2:eq1}
E\left[\tanh (aX+b)\right]=\tanh(aE\left[X\right]+b-(2p-1)a^2\sigma^2).
\end{align}
Hence, if we apply Theorem $\ref{thm2}$ and use $(\ref{cor2:eq1})$ with $a=\beta$ and $b=h$, then we get the announced result.
Note that the constant $K$ still does not depend on $h,$ which can be verified by going through the proof for Theorem $\ref{thm2}$
and using the uniform boundedness of $U(x)=\tanh(\beta x+h).$
\end{proof}


\section{Proofs}\label{proofs}

This section is devoted to proving Theorems $\ref{thm1}$ and $\ref{thm2}.$ In the following proofs, the constants $K$, without mentioning
specifically, are always assumed to satisfy the requirements of the corresponding announced statements. Note that we use
$E_\zeta$ to stand for the expectation with respect to the randomness of $\zeta$ when $\zeta$ is a random variable.

\subsection{Proof of Theorem $\ref{thm1}$}

Using replicas, we set for $1\leq i\leq 2k,$
\begin{equation}\label{thm1:prep:eq4}
l^i=\frac{1}{\sqrt{N}}\sum_{j=1}^Ng_j\sigma_j^i.
\end{equation}
Suppose that $\xi,\xi^1,\ldots,\xi^{2k}$ are i.i.d. Gaussian r.v.s with mean zero and variance $1-q$
and they are independent of $\left\{g_j\right\}_{j\leq N}$ and $\left\{g_{ij}\right\}_{i<j\leq N}.$ Recalling definitions $(\ref{thm1:prep:eq1}),$
$(\ref{thm1:prep:eq2}),$ and $(\ref{thm1:prep:eq4}),$ we consider the Gaussian interpolations,
\begin{equation}\label{thm1:prep:eq5}
u(t)=\sqrt{t}(l-r)+\sqrt{1-t}\xi,\quad u_i(t)=\sqrt{t}(l^i-r)+\sqrt{1-t}\xi^i,\,\,1\leq i\leq 2k.
\end{equation}
Suppose that $U$ is a real-valued function defined on $\Bbb{R}$ and is of moderate growth. Define
\begin{equation}\label{thm1:prep:eq6}
V(x,y)=U(x+y)-E_\xi\left[U(\xi+y)\right]
\end{equation}
and
\begin{equation}\label{thm1:prep:eq7}
\psi(t)=E\left<\prod_{i\leq 2k}V(u_i(t),r)\right>.
\end{equation}
Notice that $\psi(0)=0$ and $\psi(1)$ is equal to the left-hand side of $(\ref{thm1:eq1}).$ Now, the main idea used to prove Theorem $\ref{thm1}$
is to control the derivatives of $\psi$ up to some optimal order by using Gaussian integration by parts.
In order to justify the application of this calculus result, Lemma $\ref{lem1}$ is needed. It says that mild composition and integration of functions of
moderate growth are still of moderate growth. On the other hand, to differentiate $\psi$, we need the differentiability of $V$, which is the
reason why we establish Lemma $\ref{lem2}$ below.


\begin{lemma}\label{lem1}
Suppose that $U,V:\Bbb{R}\rightarrow\Bbb{R}$ are of moderate growth.
Define $U_0(x)=U(x)+V(x)$, $U_1(x)=U(x)^k$, $U_2(x)=U(rx)$, $U_3(x)=U(x+r)$, $U_4(x,y)=U(x+y)$, and $U_5(x)=E_\xi \left[U(\xi+x)\right]$
for $k\in\Bbb{N}$, $r\in\Bbb{R}$, and $\xi$ a centered Gaussian r.v. with variance $\sigma^2.$
Then all of the functions defined above are of moderate growth.
\end{lemma}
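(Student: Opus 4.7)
The plan is to exploit an elementary equivalence: a continuous function $U$ is of moderate growth if and only if for every $a>0$ there exists a constant $C_a$ such that $|U(\mathbf{x})|\leq C_a\exp(a\|\mathbf{x}\|^2)$ everywhere. The forward direction follows from the defining limit outside a sufficiently large ball combined with continuity (hence boundedness) of $U$ inside; the converse is immediate from $|U(\mathbf{x})|\exp(-2a\|\mathbf{x}\|^2)\leq C_a\exp(-a\|\mathbf{x}\|^2)\to 0$. Once this equivalent characterization is in hand, every case reduces to elementary manipulation of quadratic exponents.

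For $U_0,\ldots,U_4$ the verifications are essentially algebraic. The bound $|U_0(x)|\leq|U(x)|+|V(x)|$ handles the sum; applying the pointwise bound at exponent $a/k$ gives $|U_1(x)|\leq C_{a/k}^k\exp(a x^2)$; for $U_2(x)=U(rx)$ one replaces $a$ by $a/r^2$ (the case $r=0$ being trivial); for $U_3(x)=U(x+r)$ the inequality $\|x+r\|^2\leq 2\|x\|^2+2\|r\|^2$ absorbs the shift into a multiplicative constant; and for the bivariate $U_4(x,y)=U(x+y)$ the inequality $\|x+y\|^2\leq 2(\|x\|^2+\|y\|^2)$ combined with the bound at exponent $a/2$ yields $|U_4(x,y)|\leq C_{a/2}\exp(a\|(x,y)\|^2)$.

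The one step requiring actual computation is $U_5(x)=E_\xi[U(\xi+x)]$, where one must check that the Gaussian averaging does not spoil moderate growth. Given $a>0$, I would choose $a'>0$ small enough that $a'/(1-2\sigma^2 a')<a$, which is possible since this quantity tends to $0$ as $a'\to 0^+$. Plugging the bound $|U(\xi+x)|\leq C_{a'}\exp(a'(\xi+x)^2)$ into the expectation and completing the square under the Gaussian density of $\xi$ gives
\[
E_\xi\bigl[\exp(a'(\xi+x)^2)\bigr]=\frac{1}{\sqrt{1-2\sigma^2 a'}}\exp\!\left(\frac{a' x^2}{1-2\sigma^2 a'}\right),
\]
which is finite and dominated by a constant multiple of $\exp(a x^2)$, so $|U_5(x)|\leq \tilde C_a\exp(ax^2)$ for some constant $\tilde C_a$.

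The main obstacle is this last case: one must choose the auxiliary parameter $a'$ carefully so that both the Gaussian integral converges ($a'<1/(2\sigma^2)$) and the resulting exponent stays below the given $a$. The other five items are essentially immediate inspections once the equivalent growth characterization has been recorded.
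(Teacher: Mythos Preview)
Your proof is correct, and the equivalence you record at the outset---that moderate growth is the same as the sub-Gaussian envelope bound $|U(\mathbf{x})|\le C_a\exp(a\|\mathbf{x}\|^2)$ for every $a>0$---makes the whole argument cleaner and more uniform than the paper's. The paper works directly with the limit definition for each item: for $U_3$ it factors $e^{-ax^2}U(x+r)$ into two decaying pieces; for $U_4$ it splits into the events $\{|x+y|\ge M\}$ and $\{|x+y|<M\}$ and bounds each separately; and for $U_5$ it inserts an auxiliary parameter $b\in(0,1)$ into the Gaussian kernel and rewrites the exponent so that $U(y)e^{-by^2/(2\sigma^2)}$ becomes bounded while the remaining integral is finite and independent of $x$. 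Your route via the envelope characterization replaces these ad hoc manipulations with one-line algebraic checks, and your $U_5$ step is more transparent because it uses the closed-form identity $E_\xi[\exp(a'(\xi+x)^2)]=(1-2\sigma^2 a')^{-1/2}\exp\bigl(a'x^2/(1-2\sigma^2 a')\bigr)$ directly, with the single constraint $a'<1/(2\sigma^2)$ guaranteeing convergence and the choice $a'/(1-2\sigma^2 a')<a$ recovering the desired exponent. Both approaches are elementary; yours is more systematic and arguably easier to reuse. One small point neither proof makes explicit is the continuity of $U_5$, which is part of the definition of moderate growth in the paper; it follows from dominated convergence using the same envelope bound you already have.
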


\begin{proof}
It is easy to see that $U_0,$ $U_1$, and $U_2$ are of moderate growth. For $U_3$, since as $|x|\rightarrow\infty,$
$$
e^{-ax^2}U_3(x)=\exp\left({-\frac{a}{2}(x+r)^2}\right)U(x+r)\exp\left(-\frac{a}{2}(x^2-2rx-r^2)\right)\rightarrow 0,
$$
for all $a>0,$ it follows that $U_3$ is also of moderate growth. The function $U_4$ is of moderate growth since
\begin{align*}
&\limsup_{x^2+y^2\rightarrow \infty}|U_4(x,y)|\exp\left(-a(x^2+y^2)\right)\\
\leq &\limsup_{x^2+y^2\rightarrow \infty}1_{\left\{|x+y|\geq M\right\}}
|U(x+y)|\exp\left(-a(x^2+y^2)\right)\\
&+\limsup_{x^2+y^2\rightarrow \infty}1_{\left\{|x+y|<M\right\}}
|U(x+y)|\exp\left(-a(x^2+y^2)\right)\\
=&\limsup_{x^2+y^2\rightarrow \infty}1_{\left\{|x+y|\geq M\right\}}
|U(x+y)|\exp\left(-a(x^2+y^2)\right)\\
\leq &\limsup_{x^2+y^2\rightarrow \infty}1_{\left\{|x+y|\geq M\right\}}
|U(x+y)|\exp\left(-\frac{a}{2}(x+y)^2\right),
\end{align*}
for all $M>0$ and also the fact that $U$ is of moderate growth. Since $U_3$ is of moderate growth, $U_5$ is well-defined.
For $0<b<1,$ write
\begin{align*}
(x-y)^2-by^2=&\left(\sqrt{1-b}y-\frac{1}{\sqrt{1-b}}x\right)^2-\frac{b}{1-b}x^2.
\end{align*}
Thus,
\begin{align}\label{lem1:eq2}
\begin{split}
&\exp\left(-ax^2\right)U_5(x)\\
=&\frac{1}{\sqrt{2\pi \sigma^2}}\int_{-\infty}^\infty U(y)\exp\left(-ax^2\right)\exp\left(-\frac{(x-y)^2}{2\sigma^2}\right)dy\\
=&\frac{1}{\sqrt{2\pi \sigma^2}}\int_{-\infty}^\infty U(y)\exp\left(-\frac{by^2}{2\sigma^2}\right)\exp\left(-\frac{(x-y)^2-by^2}{2\sigma^2}-ax^2\right)dy\\
=&\frac{1}{\sqrt{2\pi \sigma^2}}\int_{-\infty}^\infty U(y)\exp\left(-\frac{by^2}{2\sigma^2}\right)
\exp\left(-\frac{1}{2\sigma^2}\left(\sqrt{1-b}y-\frac{1}{\sqrt{1-b}}x\right)^2\right)dy\\
&\times\exp\left(-\left(a-\frac{b}{2\sigma^2(1-b)}\right)x^2\right).
\end{split}
\end{align}
Since $U$ is of moderate growth, $U(y)\exp\left(-\frac{by^2}{2\sigma^2}\right)$ can be regarded as a bounded function in $y.$
On the other hand, since
$$
\int_{-\infty}^\infty\exp\left(-\frac{1}{2\sigma^2}\left(\sqrt{1-b}y-\frac{1}{\sqrt{1-b}}x\right)^2\right)dy
$$
is finite and independent of $x$, by taking $b$ to be small enough and letting $|x|$ tend to infinity, we conclude, from $(\ref{lem1:eq2}),$
that $U_5$ is of moderate growth. This completes the proof.

\end{proof}


\begin{lemma}\label{lem2}
Let $U:\Bbb{R}\rightarrow\Bbb{R}$ be continuously differentiable. Suppose that $U$ and its derivative $U'$ are of moderate growth.
Define $\psi(x)=E_\xi \left[U(\xi+x)\right]$ for $\xi$ a centered Gaussian r.v. with variance $\sigma^2.$ Then $\psi$ is differentiable and
$\psi'(x)=E_\xi \left[U'(\xi+x)\right].$
\end{lemma}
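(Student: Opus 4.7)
The plan is to differentiate under the integral sign, with the moderate growth of $U'$ providing the dominating function needed to justify the interchange.

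First I would rewrite $\psi(x)=\int_{-\infty}^{\infty}U(\xi+x)\phi_{0,\sigma^2}(\xi)\,d\xi$ (well-defined since $U$ is of moderate growth and the Gaussian density decays faster than any $e^{-a\xi^2}$). Then for fixed $x$ and $0<|h|\le 1$, the mean value theorem gives
\begin{equation*}
\frac{U(\xi+x+h)-U(\xi+x)}{h}=U'(\xi+x+\theta(\xi,h)\,h)
\end{equation*}
for some $\theta(\xi,h)\in(0,1)$. As $h\to 0$ this converges pointwise in $\xi$ to $U'(\xi+x)$ by continuity of $U'$, so the candidate limit is $E_\xi[U'(\xi+x)]$.

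The main task is then to produce an $h$-uniform, integrable dominating function so that dominated convergence applies. By moderate growth of $U'$, for any $a>0$ there is $C_a$ with $|U'(y)|\le C_a e^{ay^2}$ for all $y$. Using $(\xi+x+\theta h)^2\le 2\xi^2+2(|x|+1)^2$ when $|h|\le 1$, I get
\begin{equation*}
|U'(\xi+x+\theta(\xi,h)h)|\le C_a\exp\bigl(2a\xi^2\bigr)\exp\bigl(2a(|x|+1)^2\bigr).
\end{equation*}
Choosing $a<1/(4\sigma^2)$ makes $e^{2a\xi^2}\phi_{0,\sigma^2}(\xi)$ integrable in $\xi$, and the $x$-dependent factor is a constant independent of $h$. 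This is an integrable dominating function, so dominated convergence gives $\psi'(x)=E_\xi[U'(\xi+x)]$.

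The only real subtlety is the uniformity in $h$: this is why I restrict to $|h|\le 1$ (enough for a derivative computation) and use the crude bound $(\xi+x+\theta h)^2\le 2\xi^2+2(|x|+1)^2$, which decouples $\xi$ from $(x,h)$. Everything else — existence of the integral defining $\psi$, well-definedness of $E_\xi[U'(\xi+x)]$, and pointwise convergence of the difference quotient — is immediate from the hypotheses and Lemma~\ref{lem1} (applied to $U$ and to $U'$, both of moderate growth, which ensures $U_5$-type integrals exist).
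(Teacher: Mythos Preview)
Your proof is correct and follows essentially the same approach as the paper's: apply the mean value theorem to the difference quotient, use the moderate growth of $U'$ to produce an $h$-uniform integrable dominating function against the Gaussian density, and conclude by dominated convergence. The only cosmetic difference is that the paper splits the integration variable into a bounded region (where continuity of $U'$ gives a uniform bound) and an unbounded region (where moderate growth gives $|U'(z+y)|\le K_1 e^{y^2/(4\sigma^2)}$), whereas you encode both at once via the global estimate $|U'(y)|\le C_a e^{ay^2}$ with $a<1/(4\sigma^2)$; the substance is the same.
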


\begin{proof}
For any $x,x',y\in\Bbb{R},$ by the mean value theorem, we can find some $z(x,x',y)$ between $x$ and $x'$ so that
$U(x+y)-U(x'+y)=U'(z(x,x',y)+y)(x-x').$
Since $U'$ is of moderate growth, for any $M_1,M_2>0,$
\begin{align*}
K_1\equiv\sup_{|y|\geq M_1,|z|\leq M_2}|U'(z+y)|\exp\left(-\frac{y^2}{4\sigma^2}\right)<\infty.
\end{align*}
By the continuity of $U',$ $K_2\equiv\sup_{|y|\leq M_1,|z|\leq M_2}|U'(z+y)|\exp\left(-y^2/2\sigma^2\right)<\infty.$
Therefore, for $|x|,|x'|\leq M_2,$
\begin{align*}
&\left|\frac{U(x+y)-U(x'+y)}{x-x'}\right|\exp\left(-\frac{y^2}{2\sigma^2}\right)\\
\leq &K_1\exp\left(-\frac{y^2}{4\sigma^2}\right)
1_{\left\{|y|\geq M_1\right\}}+K_21_{\left\{|y|<M_2\right\}}
\end{align*}
and by the dominated convergence theorem, $\psi'(x)=E_\xi U'(\xi+x).$

\end{proof}


In view of Theorems $\ref{thm0}$, $\ref{thm1}$, and $\ref{thm2}$, the constants $K$ are independent of $N$ and $\beta.$
The main reason is because, when conditioning on the randomness of $\left\{g_{ij}\right\}_{i<j\leq N}$,
the cavity field and its quenched average are centered Gaussian distributions, whose variances are bounded above by some constants, which are
independent of $\left\{g_{ij}\right\}_{i<j\leq N}$, $N$, and $\beta.$ Therefore, we still have good control on
the moment estimates of the cavity field and its quenched average.
This observation will be used repeatedly and for convenience,
we formulate it as Lemma $\ref{lem3}.$

\begin{lemma}\label{lem3}
Let $I,J\subset\left[0,\infty\right)$ be two bounded intervals.
Suppose that $K_1,K_2,K_3$ are positive constants and that the following assumptions hold.
\begin{itemize}
\item[i)] Let $z,g_1^N,\ldots,g_N^N$ be i.i.d. standard Gaussian r.v.s for $N\in\Bbb{N}.$

\item[ii)] For $N\in\Bbb{N},$ suppose that
$$\left\{X_{j,\beta}^N:1\leq j\leq N,\beta\in J\right\}$$ is a family of random variables such that
$
|X_{j,\beta}^N|\leq K_1
$
for $1\leq j\leq N,$ and $\beta \in J.$

\item[iii)] Let $f_{1},f_2:\Bbb{N}\times I\times J\rightarrow\Bbb{R}$ be measurable functions such that
$$
|f_{1}(N,t,\beta)|\leq K_2/\sqrt{N},\quad|f_2(N,t,\beta)|\leq K_3,
$$
for $(N,t,\beta)\in \Bbb{N}\times I\times J.$
\item[iv)] Let $U:\Bbb{R}\rightarrow\Bbb{R}$ be a continuous function. Suppose that there are some
$A>0$ and $0<a<\min\left\{(8K_1^2K_2^2)^{-1},(4K_3^2)^{-1}\right\}$
such that $|U(x)|\leq A\exp(ax^2)$ for all $x\in\Bbb{R}.$

\end{itemize}
Then there is a constant $K>0$ such that
\begin{align}\label{lem3:eq1}
\sup_{N\in\Bbb{N},\beta\in J}E_0\left[\sup_{t\in I}\left|U\left(
f_{1}(N,t,\beta)\sum_{j\leq N}g_j^NX_{j,\beta}^N+f_{2}(N,t,\beta)z
\right)\right|\right]\leq K,
\end{align}
where $E_0$ means the expectation with respect to $\left\{g_j^N:j\leq N,\,N\in\Bbb{N}\right\}$ and $z.$
\end{lemma}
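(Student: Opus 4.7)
\emph{Proof plan.} The strategy is to use the exponential growth bound on $U$ to reduce the estimate to a Gaussian moment generating function calculation, then to exploit the boundedness of $f_1$, $f_2$, and $X_{j,\beta}^N$ to extract a bound uniform in $N$, $\beta$, and $t$.

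First, I would write $S = S(N,t,\beta) = f_1(N,t,\beta)\sum_{j\leq N} g_j^N X_{j,\beta}^N + f_2(N,t,\beta) z$, and set $W = \sum_{j\leq N} g_j^N X_{j,\beta}^N$ for brevity. The elementary inequality $(p+q)^2 \leq 2p^2 + 2q^2$, together with the assumed bounds $|f_1|\leq K_2/\sqrt{N}$ and $|f_2|\leq K_3$, gives the $t$-free pointwise estimate
\[
S^2 \leq 2 f_1(N,t,\beta)^2 W^2 + 2 f_2(N,t,\beta)^2 z^2 \leq \frac{2K_2^2}{N} W^2 + 2K_3^2 z^2,
\]
so the hypothesis $|U(x)|\leq A e^{ax^2}$ yields
\[
\sup_{t\in I} |U(S)| \leq A \exp\!\Bigl(\tfrac{2aK_2^2}{N} W^2\Bigr)\exp\bigl(2aK_3^2 z^2\bigr).
\]

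Next, I would take $E_0$ and use the independence of $W$ and $z$ to factorize. Conditional on the (bounded) collection $\{X_{j,\beta}^N\}$, which is independent of $\{g_j^N\}$ and $z$, the variable $W$ is centered Gaussian with variance $\sum_j (X_{j,\beta}^N)^2 \leq NK_1^2$, so the standard Gaussian integral formula gives
\[
E_0\!\left[\exp\!\Bigl(\tfrac{2aK_2^2}{N} W^2\Bigr)\,\Big|\, X\right] = \Bigl(1 - \tfrac{4aK_2^2}{N}\sum_{j\leq N} (X_{j,\beta}^N)^2\Bigr)^{-1/2} \leq (1 - 4aK_1^2 K_2^2)^{-1/2} \leq \sqrt{2},
\]
where the last inequality uses the hypothesis $a < (8K_1^2 K_2^2)^{-1}$. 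Similarly $E_0[\exp(2aK_3^2 z^2)] = (1-4aK_3^2)^{-1/2}$ is a finite deterministic constant by $a < (4K_3^2)^{-1}$. Multiplying these two constants together with $A$ produces the required uniform bound $K$.

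There is no serious obstacle here: the argument is essentially a routine application of the Gaussian moment generating function, and the two thresholds in the hypothesis on $a$ are precisely calibrated so that both conditional expectations are integrable, with bounds independent of $N\in\Bbb{N}$, $\beta\in J$, $t\in I$, and any admissible realization of the $X_{j,\beta}^N$. The only mild care required is to absorb the $t$-dependence of $f_1$ and $f_2$ into the envelope \emph{before} taking the expectation, which is what the pointwise bound on $S^2$ above accomplishes.
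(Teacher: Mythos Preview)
Your argument is correct and close in spirit to the paper's, but the key step is handled differently. After the common first move---bounding $S^2\le \tfrac{2K_2^2}{N}W^2+2K_3^2 z^2$ and invoking $|U|\le Ae^{ax^2}$---the paper does \emph{not} exploit the conditional Gaussianity of $W$. Instead it bounds $W^2$ pointwise by a multiple of $\sum_{j\le N}(g_j^N)^2$, so that the $X_{j,\beta}^N$ disappear from the estimate entirely, and then computes $E_0$ as a product of $N$ identical one-dimensional Gaussian integrals, arriving at $E_0[\exp(4aK_1^2K_2^2(g_1^N)^2)]\,E_0[\exp(2aK_3^2z^2)]$. Your route is cleaner: you keep the $X$'s in the variance of $W$, apply the Gaussian moment generating formula once, and bound the resulting variance by $NK_1^2$. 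The trade-off is that your argument uses that the $X_{j,\beta}^N$ are frozen under $E_0$ (so that $W$ really is Gaussian there), whereas the paper's pointwise bound would survive even without this; but since the very definition of $E_0$ integrates only over the $g_j^N$ and $z$, that extra robustness is never actually needed. Both routes yield a constant $K$ depending only on $A,a,K_1,K_2,K_3$.
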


\begin{proof}
From the given conditions, we obtain
\begin{align*}
&\left(f_{1}(N,t,\beta)\sum_{j\leq N}g_j^NX_{j,\beta}^N+f_{2}(N,t,\beta)z\right)^2\\
\leq &2f_1(N,t,\beta)^2\left(\sum_{j\leq N}g_{j}^NX_{j,\beta}^N\right)^2
+2f_2(N,t,\beta)^2z^2\\
\leq &4f_1(N,t,\beta)^2\sum_{j\leq N}\left(g_{j}^N\right)^2\left(X_{j,\beta}^N\right)^2+2K_3^2z^2\\
\leq &4K_1^2K_2^2\left(\frac{1}{N}\sum_{j\leq N}\left(g_j^N\right)^2\right)+2K_3^2z^2
\end{align*}
and so
\begin{align*}
&\sup_{t\in I}\left|U\left(f_{1}(N,t,\beta)\sum_{j\leq N}g_j^NX_{j,\beta}^N+f_{2}(N,t,\beta)z
\right)\right|\\
\leq &A\exp\left(4aK_1^2K_2^2\left(\frac{1}{N}\sum_{j\leq N}\left(g_j^N\right)^2\right)+2aK_3^2z^2\right).
\end{align*}
Now integrating this inequality, we find the left-hand side of $(\ref{lem3:eq1})$ is then bounded above by
\begin{align*}
E_0\left[\exp\left(4aK_1^2K_2^2(g_1^N)^2\right)\right]E_0\left[\exp\left(2aK_3^2z^2\right)\right],
\end{align*}
which is a finite constant independent of $N$ and $\beta$ since $a$ satisfies $iv).$

\end{proof}



In the sequel, we use $E_0$ to denote the expectation with respect to the randomness of $\left\{g_j\right\}_{j\leq N}$ and
$\left\{\xi^i\right\}_{i\leq 2k}.$ Recall formulas $(\ref{thm1:prep:eq2})$ and $(\ref{thm1:prep:eq5})$ for $r$ and $u_i,$
respectively. The following lemma,
as an application of Gaussian integration by parts, is our main equation to control the derivatives of all orders of $\psi.$

\begin{lemma}\label{lem4}
Let $k\in\Bbb{N}.$ Suppose that $V_1,V_2,\ldots,V_{2k}:\Bbb{R}^2\rightarrow\Bbb{R}$ are twice continuously differentiable
functions and their first and second order partial derivatives
are of moderate growth. Define
$$
\varphi(t)=E_0\left[\prod_{i\leq 2k}V_i(u_i(t),r)\right],\quad 0\leq t\leq 1.
$$
Then $\varphi$ is differentiable on $\left(0,1\right)$ and
\begin{align}\label{lem4:eq1}
\begin{split}
\varphi'(t)=&\frac{1}{2}\sum_{i\leq 2k}T_{i,i}E_0\left[\frac{\partial^2V_i}{\partial x^2}(u_i(t),r)\prod_{j\leq 2k,j\neq i}V_j(u_j(t),r)\right]\\
&+\frac{1}{2}\sum_{i,j\leq 2k,i\neq j}T_{i,j}E_0\left[\frac{\partial V_i}{\partial x}(u_i(t),r)\frac{\partial V_j}{\partial x}(u_j(t),r)
\prod_{h\leq 2k,h\neq i,j}V_h(u_h(t),r)\right]\\
&+\frac{1}{2\sqrt{t}}\sum_{i\leq 2k}T_iE_0\left[\frac{\partial^2V_i}{\partial x\partial y}(u_i(t),r)\prod_{j\leq 2k,j\neq i}V_j(u_j(t),r)\right]\\
&+\frac{1}{2\sqrt{t}}\sum_{i,j\leq 2k,i\neq j}
T_iE_0\left[\frac{\partial V_i}{\partial x}(u_i(t),r)\frac{\partial V_j}{\partial y}(u_j(t),r)\prod_{h\leq 2k,h\neq i,j}V_h(u_h(t),r)\right].
\end{split}
\end{align}
Here, $\frac{\partial^{a+b}V_i}{\partial^a x\partial^b y}$ means that we differentiate $V_i$ with respect to the first variable $a$ times and
with respect to the second variable $b$ times.
\end{lemma}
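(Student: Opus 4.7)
The plan is to regard $\varphi$ as the expectation, under $E_0$, of a smooth function evaluated at the jointly Gaussian vector $(u_1(t),\ldots,u_{2k}(t),r)$, and then to apply the multivariate Gaussian covariance-interpolation formula. Conditionally on the disorder $\{g_{ij}\}$ and the replica spins $\{\sigma^i\}$, this vector is centered Gaussian under $E_0$, and a direct computation from $(\ref{thm1:prep:eq2})$ and $(\ref{thm1:prep:eq5})$ gives
$$\mathrm{Var}(u_i(t))=(1-q)+tT_{i,i},\quad \mathrm{Cov}(u_i(t),u_j(t))=tT_{i,j}\ (i\ne j),\quad \mathrm{Cov}(u_i(t),r)=\sqrt{t}\,T_i,$$
while $\mathrm{Var}(r)$ is independent of $t$. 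Differentiating these in $t$ produces exactly the coefficients $T_{i,i}$, $T_{i,j}$, and $T_i/(2\sqrt{t})$ that appear in $(\ref{lem4:eq1})$, so the content of the lemma is really the identification of the derivative via covariance interpolation, combined with a careful product-rule expansion.

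I would carry out the calculation in two steps. First, on any compact sub-interval of $(0,1)$, differentiate $\varphi$ under $E_0$ via the chain rule to obtain
$$\varphi'(t)=\sum_{i\le 2k}E_0\left[\partial_xV_i(u_i(t),r)\,u_i'(t)\prod_{j\ne i}V_j(u_j(t),r)\right],\quad u_i'(t)=\frac{l^i-r}{2\sqrt{t}}-\frac{\xi^i}{2\sqrt{1-t}}.$$
The interchange of $d/dt$ and $E_0$ is justified by Lemma $\ref{lem2}$ and dominated convergence, with the dominating functions being of moderate growth by Lemma $\ref{lem1}$ and uniformly $E_0$-integrable by Lemma $\ref{lem3}$. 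Second, apply one-variable Gaussian integration by parts separately to each $\xi^i$ and to each $g_{j'}$. Since $\xi^i\sim N(0,1-q)$ is independent of everything and enters only $u_i(t)$ with weight $\sqrt{1-t}$, the $\xi^i$-piece collapses to $-\frac{1-q}{2}\,E_0[\partial_x^2V_i\prod_{j\ne i}V_j]$. For the $g_{j'}$-piece, $g_{j'}$ enters every $u_h(t)$ with weight $\tfrac{\sqrt t}{\sqrt N}\dot\sigma_{j'}^h$ and enters $r$ with weight $\tfrac{1}{\sqrt N}\langle\sigma_{j'}\rangle$, so IBP followed by summation over $j'$ produces four families of terms involving, respectively, $\partial_x^2V_i$, $\partial_x\partial_yV_i$, $\partial_xV_i\partial_xV_h$, and $\partial_xV_i\partial_yV_h$.

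The key combinatorial step is that the sums over $j'$ collapse via $(\ref{Intro:eq4})$ to
$$\frac{1}{N}\sum_{j'}(\dot\sigma_{j'}^i)^2=T_{i,i}+(1-q),\quad \frac{1}{N}\sum_{j'}\dot\sigma_{j'}^i\dot\sigma_{j'}^h=T_{i,h}\ (i\ne h),\quad \frac{1}{N}\sum_{j'}\dot\sigma_{j'}^i\langle\sigma_{j'}\rangle=T_i,$$
and the decisive cancellation is that the $-\frac{1-q}{2}$ from the $\xi^i$-IBP is wiped out exactly by the $(1-q)$ portion of $T_{i,i}+(1-q)$ coming out of the $g$-IBP. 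What remains reassembles precisely into the four sums on the right-hand side of $(\ref{lem4:eq1})$; the cross-partial term appears twice (once from differentiating $\partial_xV_i$ in $y$ through $r$, once from differentiating the remaining factor $V_h$ in $y$ through $r$), producing the two separate $T_i$-sums.

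I expect the main obstacle to be analytic rather than algebraic: one must verify that every product of first- and second-order partial derivatives of the $V_i$'s arising during differentiation and Gaussian IBP belongs to the moderate-growth class, and then that each such product is uniformly $E_0$-integrable so that dominated convergence applies. The closure of moderate growth under the relevant sums, products, translations, and Gaussian convolutions is exactly the content of Lemma $\ref{lem1}$, while the uniform $E_0$-integrability — with bounds not depending on $N$, $\beta$, or $t$ in a compact interval — is exactly what Lemma $\ref{lem3}$ was set up to provide, once one writes $u_i(t)$ and $r$ in the form $f_1(N,t,\beta)\sum_jg_jX_j+f_2(N,t,\beta)z$ required there. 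Granted these analytic preliminaries, the proof reduces to a mechanical but bookkeeping-heavy application of Gaussian IBP, with all the structure of $(\ref{lem4:eq1})$ already visible in the covariance computation above.
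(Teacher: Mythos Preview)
Your proposal is correct and follows essentially the same approach as the paper: differentiate under $E_0$ via the chain rule (justified by dominated convergence using Lemmas~\ref{lem1}--\ref{lem3}), then apply Gaussian integration by parts, with the covariance structure producing the $T_{i,i}$, $T_{i,j}$, $T_i$ coefficients. The only cosmetic difference is that the paper applies IBP directly to the single Gaussian factor $u_i'(t)$ and reads off $E_0[u_i'(t)u_j(t)]=T_{i,j}/2$, $E_0[u_i'(t)r]=T_i/(2\sqrt t)$ in one stroke, whereas you split $u_i'(t)$ into its $\xi^i$- and $g_{j'}$-pieces and observe the $(1-q)$-cancellation explicitly; the computations are identical in content.
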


\begin{proof}
To prove the differentiability of $\varphi$, it suffices to prove, with the help of the mean value theorem and
the dominated convergence theorem, that for $0<\delta<1/2$ and $1\leq i\leq 2k,$
\begin{align}\label{lem4:eq0}
E_0\left[\sup_{\delta\leq t\leq 1-\delta}|u_i'(t)|\sup_{0\leq t\leq 1}\left|\frac{\partial V_i}{\partial x}(u_i(t),r)\right|
\prod_{j\leq 2k,j\neq i}\sup_{0\leq t\leq 1}\left|V_j(u_j(t),r)\right|\right]\leq K,
\end{align}
for some constant $K.$
 Note that $$
u_i'(t)=\frac{1}{2\sqrt{Nt}}\sum_{j\leq N}g_j\dot{\sigma}_j-\frac{1}{2\sqrt{1-t}}\xi^i.
$$
Since $\frac{\partial V_i}{\partial x}$
and $V_j$ are of moderate growth, for any $a>0$, there exists some $A>0$ such that
\begin{align}\label{lem4:eq4}
\left|\frac{V_i}{\partial x}(x,y)\right|,\left|V_j(x,y)\right|\leq A\exp(a(x^2+y^2)),\quad (x,y)\in\Bbb{R}^2.
\end{align}
Set $\left(z,g_j^N\right)=\left((1-q)^{-1}\xi^i,g_j\right)$ and also let
$\left(X_{j,\beta}^N,f_1(N,t,\beta),f_2(N,t,\beta),U(x)\right)$ be any one of the following vectors
\begin{align*}
&\left(\dot{\sigma}_j,\frac{1}{2\sqrt{Nt}},-\frac{1-q}{2\sqrt{1-t}},x^{2k+1}\right),\\
&\left(\dot{\sigma}_j,\frac{\sqrt{t}}{\sqrt{N}},(1-q)\sqrt{1-t},\exp((4k+2)ax^2)\right),\\
&\left(\left<\sigma_j\right>,\frac{1}{\sqrt{N}},0,\exp((4k+2)ax^2)\right).
\end{align*}
Then by choosing $a$ small enough and applying Lemma $\ref{lem3}$, there exists a constant $K$ independent of $\beta$ and $N$ such that
\begin{align*}
&E_0\left[\sup_{\delta\leq t\leq 1-\delta}|u_i'(t)|^{2k+1}\right]\leq K,\\
&E_0\left[\sup_{0\leq t\leq 1}\exp\left((4k+2)au_i(t)^2\right)\right]\leq K,\\
&E_0\left[\sup_{0\leq t\leq 1}\exp\left((4k+2)ar^2\right)\right]\leq K.
\end{align*}
Therefore, from $(\ref{lem4:eq4})$ and Cauchy-Schwarz inequality,
$$
E_0\left[\sup_{0\leq t\leq 1}\left|\frac{\partial V_i}{\partial x}(u_i(t),r)\right|\right]^{2k+1},\,\,
E_0\left[\sup_{0\leq t\leq 1}\left|V_j(u_j(t),r)\right|\right]^{2k+1}\leq K
$$
and by H\"{o}lder's inequality, $(\ref{lem4:eq0})$ holds.

To prove $(\ref{lem4:eq1}),$ we use Gaussian integration by parts,
\begin{align}\label{lem4:eq2}
\begin{split}
&\varphi'(t)\\
=&\sum_{i\leq 2k}E_0\left[u_i'(t)\frac{\partial V_i}{\partial x}(u_i(t),r)\prod_{j\leq 2k,j\neq i}V_j(u_j(t),r)\right]\\
=&\sum_{i\leq 2k}E_0\left[u_i'(t)u_i(t)\right]E_0\left[\frac{\partial^2 V_i}{\partial x^2}(u_i(t),r)\prod_{j\leq 2k,j\neq i}V_j(u_j(t),r)\right]\\
&+\sum_{i,j\leq 2k,i\neq j}E_0\left[u_i'(t)u_j(t)\right]E_0\left[\frac{\partial V_i}{\partial x}(u_i(t),r)
\frac{\partial V_j}{\partial x}(u_j(t),r)\prod_{h\leq 2k,h\neq i,j}V_h(u_h(t),r)\right]\\
&+\sum_{i\leq 2k}E_0\left[u_i'(t)r\right]E_0\left[\frac{\partial^2 V_i}{\partial x\partial y}(u_i(t),r)
\prod_{j\leq 2k,j\neq i}V_j(u_j(t),r)\right]\\
&+\sum_{i,j\leq 2k,i\neq j}E_0\left[u_i'(t)r\right]E_0\left[\frac{\partial V_i}{\partial x}(u_i(t),r)
\frac{\partial V_j}{\partial y}(u_j(t),r)\prod_{h\leq 2k,h\neq i,j}V_h(u_h(t),r)\right].
\end{split}
\end{align}
Recalling definition $(\ref{Intro:eq4})$, a straightforward computation yields
\begin{align}\label{lem4:eq3}
\begin{split}
E_0\left[u'_i(t)u_i(t)\right]=&T_{i,i}/2,\\
E_0\left[u_i'(t)u_j(t)\right]=&T_{i,j}/{2},\quad\mbox{for $i\neq j$},\\
E_0\left[u_i'(t)r\right]=&{T_i}/{2\sqrt{t}}.
\end{split}
\end{align}
Combining $(\ref{lem4:eq2})$ and $(\ref{lem4:eq3})$ gives $(\ref{lem4:eq1}).$

\end{proof}

\begin{remark}\label{rem1}\rm
Formula $(\ref{lem4:eq1})$ implies that our computation on the derivative of $\varphi$ can be completely determined by
$T_{i,i}$, $T_{i,j}$, and $T_i$ in the following manner.
Each $T_{i,i}$ is associated with $\frac{\partial^2 V_i}{\partial x^2}(u_i(t),r)$; each $T_{i,j}$ is associated with
$\frac{\partial V_i}{\partial x}(u_i(t),r)\frac{\partial V_j}{\partial x}(u_j(t),r).$ As for each $T_i,$ it is associated with either
$\frac{\partial^2V_i}{\partial x\partial y}(u_i(t),r)$ or $\frac{\partial V_i}{\partial x}(u_i(t),r)\frac{\partial V_j}{\partial y}(u_j(t),r)$ for some $j\neq i.$
This observation will be very useful when we control the derivatives of all orders of $\psi.$
\end{remark}


Lemma $\ref{lem4}$ will be used iteratively up to some optimal order. Since on each iteration, equation $(\ref{lem4:eq1})$
brings us many terms, we will finally obtain a very huge number of summations. Therefore,
in order to make our argument clearer, we formulate the following result.

\begin{lemma}\label{lem:combin}
Fix an integer $m>0$
and let $\psi$ and $\psi_{\mathbf{s}_n}$
be real-valued smooth functions defined on $\left[0,1\right]$ for every $\mathbf{s}_n=(s_n(1),\ldots,s_n(n))\in \left\{0,1\right\}^n$ with $1\leq n\leq m+1.$
Suppose that $\psi(0)=0$ and $\psi_{\mathbf{s}_n}(0)=0$ for every $\mathbf{s}_n\in\left\{0,1\right\}^n$ with $1\leq n<m.$ If
\begin{align}\label{lem:combin:eq1}
\psi'(t)=\frac{1}{2}\psi_{(0)}(t)+\frac{1}{2\sqrt{t}}\psi_{(1)}(t)
\end{align}
and
\begin{align}\label{lem:combin:eq2}
\psi_{\mathbf{s}_n}'(t)=\frac{1}{2}\psi_{(\mathbf{s}_n,0)}(t)+\frac{1}{2\sqrt{t}}\psi_{(\mathbf{s}_n,1)}(t),
\end{align}
for every $\mathbf{s}_n\in\left\{0,1\right\}^n$ with $1\leq n\leq m,$ then
\begin{align}\label{lem:combin:eq3}
\begin{split}
\psi(t)=&
\frac{1}{2^m}\sum_{\mathbf{s}_{m}}\int_0^t\int_0^{t_1}\ldots\int_0^{t_{m-1}}\frac{1}{\prod_{n=1}^{m}t_n^{s_n/2}}dt_{m}
\ldots dt_2dt_1\psi_{\mathbf{s}_{m}}(0)\\
&+\frac{1}{2^{m+1}}\sum_{\mathbf{s}_{m+1}}\int_0^t\int_0^{t_1}\ldots\int_0^{t_{m}}\frac{1}{\prod_{n=1}^{m+1}t_n^{s_n/2}}\psi_{\mathbf{s}_{m+1}}(t_{m+1})dt_{m+1}
\ldots dt_2dt_1.
\end{split}
\end{align}
\end{lemma}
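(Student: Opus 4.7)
The plan is to establish $(\ref{lem:combin:eq3})$ by iterating $(\ref{lem:combin:eq1})$ and $(\ref{lem:combin:eq2})$ via the fundamental theorem of calculus, using the vanishing initial conditions $\psi_{\mathbf{s}_n}(0)=0$ for $n<m$ to kill every intermediate boundary term, and then splitting off the boundary value only at the final level $n=m$.

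First I would prove, by induction on $k \in \{1,\ldots,m\}$, the intermediate identity
\[
\psi(t) \;=\; \frac{1}{2^k}\sum_{\mathbf{s}_k\in\{0,1\}^k}\int_0^t\!\!\int_0^{t_1}\!\!\cdots\!\int_0^{t_{k-1}} \frac{\psi_{\mathbf{s}_k}(t_k)}{\prod_{n=1}^k t_n^{s_n/2}}\,dt_k\cdots dt_1.
\]
The base case $k=1$ is immediate from $\psi(0)=0$, the fundamental theorem of calculus, and $(\ref{lem:combin:eq1})$. For the inductive step at level $k<m$, the hypothesis $\psi_{\mathbf{s}_k}(0)=0$ combined with $(\ref{lem:combin:eq2})$ yields
\[
\psi_{\mathbf{s}_k}(t_k) \;=\; \frac{1}{2}\sum_{s_{k+1}\in\{0,1\}}\int_0^{t_k} \frac{\psi_{(\mathbf{s}_k,s_{k+1})}(t_{k+1})}{t_{k+1}^{s_{k+1}/2}}\,dt_{k+1}.
\]
Substituting this into the $k$th identity, pulling the inner sum out, and merging $\sum_{\mathbf{s}_k}\sum_{s_{k+1}}$ into $\sum_{\mathbf{s}_{k+1}}$ produces the identity at level $k+1$ with the correct prefactor $1/2^{k+1}$.

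Finally, at $k=m$ the value $\psi_{\mathbf{s}_m}(0)$ need not vanish, so I would decompose $\psi_{\mathbf{s}_m}(t_m) = \psi_{\mathbf{s}_m}(0) + \int_0^{t_m}\psi'_{\mathbf{s}_m}(t_{m+1})\,dt_{m+1}$ and apply $(\ref{lem:combin:eq2})$ only to the integral term. Plugging this splitting into the $k=m$ identity gives exactly $(\ref{lem:combin:eq3})$: the first summand comes from the boundary value $\psi_{\mathbf{s}_m}(0)$, and the second comes from expanding $\psi'_{\mathbf{s}_m}$ via the recurrence and relabeling $(\mathbf{s}_m,s_{m+1})=\mathbf{s}_{m+1}$. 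The main obstacle is purely bookkeeping: tracking the exponents $s_n/2$, the doubling of the sum from $\{0,1\}^k$ to $\{0,1\}^{k+1}$, and the factor $1/2^{k+1}$ at each iteration. There is no analytic difficulty, since each $\psi_{\mathbf{s}_n}$ is smooth and hence bounded on $[0,1]$ and $\int_0^t t_n^{-1/2}\,dt_n = 2\sqrt{t}<\infty$, so every iterated integral converges absolutely and Fubini justifies the interchanges.
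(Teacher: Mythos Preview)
Your proposal is correct and follows essentially the same route as the paper: both arguments first establish the intermediate identity
\[
\psi(t)=\frac{1}{2^k}\sum_{\mathbf{s}_k}\int_0^t\!\cdots\!\int_0^{t_{k-1}}\frac{\psi_{\mathbf{s}_k}(t_k)}{\prod_{n=1}^k t_n^{s_n/2}}\,dt_k\cdots dt_1
\]
by induction (the paper inducts on the lemma parameter $m$, you induct on an internal counter $k$, which amounts to the same thing), and then both obtain $(\ref{lem:combin:eq3})$ by writing $\psi_{\mathbf{s}_m}(t_m)=\psi_{\mathbf{s}_m}(0)+\int_0^{t_m}\psi_{\mathbf{s}_m}'(t_{m+1})\,dt_{m+1}$ and expanding the derivative via $(\ref{lem:combin:eq2})$.
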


\begin{proof}
It suffices to prove that
\begin{align}\label{lem:combin:eq4}
\psi(t)=&\frac{1}{2^m}\sum_{\mathbf{s}_{m}}\int_0^t\int_0^{t_1}\ldots\int_0^{t_{m-1}}\frac{1}{\prod_{n=1}^{m}t_n^{s_n/2}}\psi_{\mathbf{s}_{m}}(t_{m})dt_{m}
\ldots dt_2dt_1.
\end{align}
Indeed, if $(\ref{lem:combin:eq4})$ holds, then $(\ref{lem:combin:eq3})$ can be deduced by applying $$
\psi_{\mathbf{s}_m}(t_m)=\int_0^{t_m}
\psi_{\mathbf{s}_m}'(t_{m+1})dt_{m+1}+\psi_{\mathbf{s}_m}(0)$$ and $(\ref{lem:combin:eq2})$ to $(\ref{lem:combin:eq4}).$

Let us prove $(\ref{lem:combin:eq4})$ by induction on $m$. If $m=1,$ from $(\ref{lem:combin:eq1}),$
$(\ref{lem:combin:eq4})$ holds clearly by
$$
\psi(t)=\int_0^t\psi'(t_1)dt_1+\psi(0)=\frac{1}{2}\int_0^t\left(\psi_{(0)}(t_1)+\frac{1}{\sqrt{t_1}}\psi_{(1)}(t_1)\right)dt_1.
$$
Suppose that the announced result is true for $m-1\geq 1$. Let $\psi$ and $\psi_{\mathbf{s}_n}$ be real-valued smooth functions
for every $\mathbf{s}_n\in \left\{0,1\right\}^{n}$ with $n\leq m+1$ satisfying the assumptions of this lemma.
Notice that from $(\ref{lem:combin:eq2}),$ we obtain
\begin{align*}
\psi_{\mathbf{s}_{m-1}}(t_{m-1})=&\int_0^{t_{m-1}}\psi_{\mathbf{s}_{m-1}}'(t_{m})dt_{m}+\psi_{\mathbf{s}_{m-1}}(0)\\
=&\frac{1}{2}\int_0^{t_{m-1}}\left(\psi_{(\mathbf{s}_{m-1},0)}(t_{m})+\frac{1}{\sqrt{t_{m}}}\psi_{(\mathbf{s}_{m-1},1)}(t_{m})\right)dt_{m}
\end{align*}
and also by induction hypothesis,
$$
\psi(t)=\frac{1}{2^{m-1}}\sum_{\mathbf{s}_{m-1}}\int_0^t\int_0^{t_1}\ldots\int_0^{t_{m-2}}\frac{1}{\prod_{n=1}^{m-1}t_n^{s_n/2}}\psi_{\mathbf{s}_{m-1}}(t_{m-1
})dt_{m-1}
\ldots dt_2dt_1.
$$
Now $(\ref{lem:combin:eq4})$ follows by combining last two equations together
\begin{align*}
\psi(t)=&\frac{1}{2^{m-1}}\sum_{\mathbf{s}_{m-1}}\int_0^t\int_0^{t_1}\ldots\int_0^{t_{m-2}}\int_0^{t_{m-1}}\frac{1}{\prod_{n=1}^{m-1}t_n^{s_n/2}}\\
&\qquad\qquad\times\left(\frac{1}{2}\psi_{(\mathbf{s}_{m-1},0)}(t_{m})+\frac{1}{2\sqrt{t_{m}}}\psi_{(\mathbf{s}_{m-1},1)}(t_{m})\right)
dt_{m}dt_{m-1}\ldots dt_2dt_1\\
=&\frac{1}{2^{m}}\sum_{\mathbf{s}_{m}}\int_0^t\int_0^{t_1}\ldots\int_0^{t_{m-1}}\frac{1}{\prod_{n=1}^{m}t_n^{s_n/2}}\psi_{\mathbf{s}_{m}}(t_{m})dt_{m-1}
\ldots dt_2dt_1.
\end{align*}

\end{proof}


\begin{Proof of theorem} $\mathbf{\ref{thm1}}:$
Basically our proof is based on the same idea as the proof of Theorem $\ref{thm0}.$
Recall formulas $(\ref{thm1:prep:eq6})$ and $(\ref{thm1:prep:eq7})$ for $V$ and $\psi$, respectively.
From Lemmas $\ref{lem1}$ and $\ref{lem2},$ $V$ is an infinitely differentiable function and the partial derivatives of all orders of $V$ are of moderate growth.
We also note that $\psi$ is infinitely differentiable
by applying the same argument as Lemma $\ref{lem4}$. Recall the definition of $E_0$ and use Fubini's theorem, $\psi(t)$ is equal to
$$
E\left<E_0\left[\prod_{i\leq 2k}V(u_i(t),r)\right]\right>.
$$
Now applying Lemma $\ref{lem4}$ to
$$
E_0\left[\prod_{i\leq 2k}V(u_i(t),r)\right]
$$
and then taking expectation $E\left<\cdot\right>$ on $(\ref{lem4:eq1}),$
we obtain
\begin{align*}
\psi'(t)=&\frac{1}{2}\psi_{(0)}(t)+\frac{1}{2\sqrt{t}}\psi_{(1)}(t),
\end{align*}
where
\begin{align*}
\psi_{(0)}(t)=&\sum_{i\leq 2k}E\left<T_{i,i}E_0\left[\frac{\partial^2V}{\partial x^2}(u_i(t),r)\prod_{j\leq 2k,j\neq i}V(u_j(t),r)\right]\right>\\
&+\sum_{i,j\leq 2k,i\neq j}E\left<T_{i,j}E_0\left[\frac{\partial V}{\partial x}(u_i(t),r)\frac{\partial V}{\partial x}(u_j(t),r)
\prod_{h\leq 2k,h\neq i,j}V(u_h(t),r)\right]\right>
\end{align*}
and
\begin{align*}
\psi_{(1)}(t)=&\sum_{i\leq 2k}E\left<T_iE_0\left[\frac{\partial^2V}{\partial x\partial y}(u_i(t),r)\prod_{j\leq 2k,j\neq i}V(u_j(t),r)\right]\right>\\
&+\sum_{i,j\leq 2k,i\neq j}
E\left<T_iE_0\left[\frac{\partial V}{\partial x}(u_i(t),r)\frac{\partial V}{\partial y}(u_j(t),r)\prod_{h\leq 2k,h\neq i,j}V(u_h(t),r)\right]\right>.
\end{align*}
Next, to compute the derivative of $\psi_{(0)}$, let us apply Lemma $\ref{lem4}$ again to
$$
E_0\left[\frac{\partial^2V}{\partial x^2}(u_i(t),r)\prod_{j\leq 2k,j\neq i}V(u_j(t),r)\right],\quad i\leq 2k,
$$
and
$$
E_0\left[\frac{\partial V}{\partial x}(u_i(t),r)\frac{\partial V}{\partial x}(u_j(t),r)
\prod_{h\leq 2k,h\neq i,j}V(u_h(t),r)\right],\quad i,j\leq 2k,\,\,i\neq j,
$$
and then take expectation $E\left<\cdot\right>$. Then
we obtain two smooth functions $\psi_{(0,0)}$ and $\psi_{(0,1)}$ defined on $\left[0,1\right]$ such that
$$
\psi_{(0)}'(t)=\frac{1}{2}\psi_{(0,0)}(t)+\frac{1}{2\sqrt{t}}\psi_{(0,1)}(t).
$$
Similarly, the derivative of $\psi_{(1)}'$ can also be computed in the same way, which leads to two smooth functions $\psi_{(1,0)}$ and $\psi_{(1,1)}$
defined on $\left[0,1\right]$ such that
$$
\psi_{(1)}'(t)=\frac{1}{2}\psi_{(1,0)}(t)+\frac{1}{2\sqrt{t}}\psi_{(1,1)}(t).
$$
Continuing this process, we get $\left\{\psi_{\mathbf{s}_n}:s_n\in\left\{0,1\right\}^n,n\leq 2k+1\right\}$
for which $(\ref{lem:combin:eq1})$ and $(\ref{lem:combin:eq2})$ hold.

We claim that $\psi_{\mathbf{s}_n}(0)=0$ for every $\mathbf{s}_n=(s_n(1),\ldots,s_n(n))\in\left\{0,1\right\}^n$ with $n<2k.$
To see this, let us observe that from Remark $\ref{rem1},$ a typical term in those very complicated summations in the expression
of $\psi_{\mathbf{s}_n}$ is of the form
\begin{align}\label{lem1:eq1}
E\left<\prod_{i\leq 2k}T_{i,i}^{k_1(i)}\prod_{i,j\leq 2k,i\neq j}T_{i,j}^{k_2(i,j)}\prod_{i\leq 2k}T_i^{k_3(i)}E_0\left[
\prod_{i\leq 2k}\frac{\partial^{k_4(i)+k_5(i)}V}{\partial x^{k_4(i)}\partial y^{k_5(i)}}(u_i(t),r)\right]\right>,
\end{align}
with
\begin{align*}
&\sum_{i\leq 2k}k_1(i)+\sum_{i,j\leq 2k,i\neq j}k_2(i,j)+\sum_{i\leq 2k}k_3(i)=n,\\
&2k_1(i)+\sum_{j\leq 2k,j\neq i}(k_2(i,j)+k_2(j,i))+k_3(i)=k_4(i),\,\,i\leq 2k,\\
&\sum_{i\leq 2k}k_3(i)=\sum_{i\leq n}s_n(i)=\sum_{i\leq 2k}k_5(i),
\end{align*}
where $k_1(i),k_2(i,j),k_3(i),k_4(i)$, and $k_5(i)$ are nonnegative integers for $i,j\leq 2k$ with $i\neq j.$

First of all, notice that if
$$
\left<\prod_{i\leq 2k}T_{i,i}^{k_1(i)}\prod_{i,j\leq 2k,i\neq j}T_{i,j}^{k_2(i,j)}\prod_{i\leq 2k}T_i^{k_3(i)}\right>\neq 0,
$$
then for $i\leq 2k,$ either
$k_4(i)\geq 2$ or it is equal to zero. That is, if $i$ occurs in one of the subscripts of
$T_{i,i},$ $T_{i,j}$ or $T_i$, it must occur more than once and we suppose that this is the case. Secondly,
if
$$
E_0\left[
\prod_{i\leq 2k}\frac{\partial^{k_4(i)+k_5(i)}V}{\partial x^{k_4(i)}\partial y^{k_5(i)}}(u_i(0),r)\right]\neq 0,
$$
then $k_4(i)\geq 1$ for every $i\leq 2k$ since $E_{\xi_i}\left[\frac{\partial^{h}V}{\partial y^{h}}(u_i(0),r)\right]=0$ for all $h\geq 0$ and $i\leq 2k.$
Therefore, we conclude that
$k_4(i)\geq 2$
for every $i\leq 2k$ and it implies
\begin{align*}
2n=&2\sum_{i\leq 2k}k_1(i)+2\sum_{i,j\leq 2k,i\neq j}k_2(i,j)+2\sum_{i\leq 2k}k_3(i)\\
=&\sum_{i\leq 2k}\left[2k_1(i)+\sum_{j\leq 2k,j\neq i}(k_2(i,j)+k_2(j,i))+k_3(i)\right]+\sum_{i\leq 2k}k_3(i)\\
=&\sum_{i\leq 2k}k_4(i)+\sum_{i\leq 2k}k_3(i)
\\
\geq & 2k\cdot 2+\sum_{i\leq 2k}k_3(i)\\
\geq &2k\cdot 2\\
=&4k.
\end{align*}
Hence, if $(\ref{lem1:eq1})$ is not equal to zero, then $n\geq 2k$. So
$\psi_{\mathbf{s}_n}(0)=0$ for every ${\mathbf{s}_n}\in\left\{0,1\right\}^{n}$ with $n<2k,$ which completes the proof of our claim.
In addition, we can conclude more from above that if $n=2k$ and $(\ref{lem1:eq1})$ does not vanish, since $\sum_{i\leq 2k}k_3(i)=\sum_{i\leq n}s_n(i)$
and $k_4(i)\geq 2$ for every $i\leq 2k,$
it implies that $s_n(i)=0$ and $k_4(i)=2$ for every $i\leq 2k.$ Consequently, it means that $\psi_{\mathbf{s}_{2k}}(0)=0$ for every $\mathbf{s}_{2k}
\in\left\{0,1\right\}^{2k}$ unless $\mathbf{s}_{2k}=\mathbf{0}_{2k}\equiv(0,\ldots,0)$ and in this case, $$
\psi_{\mathbf{0}_{2k}}(0)=\sum_{i_1,i_1',\ldots,i_{2k},i_{2k}'} E\left<T_{i_1,i_1'}\cdots T_{i_{2k},i_{2k}'}E_0
\left[\prod_{i\leq 2k}U''(\xi^i+r)\right]\right>,
$$
where $(i_1,i_1',\ldots,i_{2k},i_{2k}')\in\left\{1,\ldots,2k\right\}^{4k}$ satisfies that each number $i\leq 2k$ occurs exactly twice in this vector.

Now, concluding from $(\ref{lem:combin:eq3})$ in Lemma $\ref{lem:combin}$ and using
$$
\int_0^t\int_0^{t_1}\ldots\int_0^{t_{2k-1}}dt_{2k}\ldots dt_2dt_1=\frac{t^{2k}}{(2k)!},
$$
we obtain
\begin{align*}
\psi(t)=&
\frac{t^{2k}}{2^{2k}(2k)!}\sum_{i_1,i_1',\ldots,i_{2k},i_{2k}'} E\left<T_{i_1,i_1'}\cdots T_{i_{2k},i_{2k}'}E_0
\left[\prod_{i\leq 2k}U''(\xi^i+r)\right]\right>\\
&+\frac{1}{2^{2k+1}}\sum_{\mathbf{s}_{2k+1}}\int_0^t\int_0^{t_1}\ldots\int_0^{t_{2k}}\frac{1}{\prod_{n=1}^{2k+1}t_n^{s_n/2}}\psi_{\mathbf{s}_{
2k+1}}(t_{2k+1})dt_{2k+1}
\ldots dt_2dt_1.\notag
\end{align*}
Finally, since each term in those very complicated summations in the expression of $\psi_{\mathbf{s}_{2k+1}}$ is given by formula $(\ref{lem1:eq1})$ with $n=2k+1,$
by applying Lemma $\ref{lem3}$, the known result $(\ref{Intro:eq3})$, and H\"{o}lder's inequality, we obtain some $K>0$ depending on $\beta_0$, $k$, and $U$
only such that
$\sup_{0\leq t\leq 1}|\psi_{s_{2k+1}}(t)|\leq \frac{K}{N^{k+1/2}}$ for every $\beta\leq \beta_0$ and $h.$
Similarly we also have
$$
\left|E\left<T_{i_1,i_1'}\cdots T_{i_{2k},i_{2k}'}E_0\left[\prod_{i\leq 2k}U''(\xi^i+r)\right]\right>\right|\leq \frac{K}{N^{k}}.
$$
Therefore, $\psi(1)\leq \frac{K}{N^k}$
and we are done.
\end{Proof of theorem}


\subsection{Proof of Theorem \ref{thm2}}




The following proposition is the key to proving Theorem $\ref{thm2}.$

\begin{proposition}\label{prop1}
Let $\beta_0<1/2$ and $k\in\Bbb{N}.$ Suppose that $U$ is an infinitely differentiable function defined on $\Bbb{R}$
and the derivatives of all orders of $U$ are of moderate growth.
Recall $l$ and $r$ as defined by $(\ref{thm1:prep:eq1})$ and $(\ref{thm1:prep:eq2}).$
Then for any $\beta\leq \beta_0$ and $h,$
\begin{equation*}
E\left[\frac{\left<U(l)\cosh(\beta l+h)\right>}{\left<\cosh(\beta l+h)\right>}-\frac{E_\xi\left[U(\xi+r)
\cosh(\beta(\xi+r)+h)\right]}{\exp\left(\frac{\beta^2}{2}(1-q)\right)\cosh(\beta r+h)}\right]^{2k}\leq \frac{K}{N^k},
\end{equation*}
where $\xi$ is a centered Gaussian distribution with variance $1-q$ and $K$ depends on $\beta_0,k$ and $U$ only.
\end{proposition}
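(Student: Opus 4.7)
\textbf{Proof plan for Proposition \ref{prop1}.}
The plan is to reduce the statement to two applications of Theorem \ref{thm1}, one for the numerator and one for the denominator of the ratio, and then control the error after division. Set
\[
f(x)=U(x)\cosh(\beta x+h),\qquad g(x)=\cosh(\beta x+h).
\]
Both $f$ and $g$ are infinitely differentiable, and because $\cosh$ and $\sinh$ are dominated by $e^{a x^2}$ for every $a>0$, their derivatives of all orders are of moderate growth (using also Lemma \ref{lem1} to see that products of moderate growth functions with smooth factors stay moderate). Hence Theorem \ref{thm1} applies to each: with $\phi=\phi_{r,1-q}$,
\[
E\bigl[\langle f(l)\rangle-E_\xi f(\xi+r)\bigr]^{2k}\le K/N^k,\qquad
E\bigl[\langle g(l)\rangle-E_\xi g(\xi+r)\bigr]^{2k}\le K/N^k.
\]

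Next I would carry out the elementary Gaussian identity that yields the denominator on the right-hand side of the proposition. Writing $\xi+r$ inside $\cosh$ and using $\cosh(a+b)=\cosh a\cosh b+\sinh a\sinh b$ together with $E_\xi\sinh(\beta\xi)=0$ and $E_\xi\cosh(\beta\xi)=\exp(\beta^2(1-q)/2)$, one gets
\[
E_\xi\bigl[\cosh(\beta(\xi+r)+h)\bigr]=\exp\!\bigl(\tfrac{\beta^2}{2}(1-q)\bigr)\cosh(\beta r+h),
\]
which is exactly the denominator in the target expression. So the quantity inside brackets in the proposition equals $\langle f(l)\rangle/\langle g(l)\rangle-E_\xi f(\xi+r)/E_\xi g(\xi+r)$.

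Now I decompose the ratio difference. With $A=\langle f(l)\rangle$, $B=\langle g(l)\rangle$, $A'=E_\xi f(\xi+r)$, $B'=E_\xi g(\xi+r)$, I use
\[
\frac{A}{B}-\frac{A'}{B'}=\frac{A-A'}{B}+\frac{A'(B'-B)}{BB'}.
\]
Since $\cosh\ge 1$, we have $B\ge 1$ and $B'\ge 1$ deterministically, so the denominators are harmless. By Minkowski,
\[
E\!\Bigl[\tfrac{A}{B}-\tfrac{A'}{B'}\Bigr]^{2k}\le 2^{2k-1}E|A-A'|^{2k}+2^{2k-1}E\bigl[|A'|^{2k}|B'-B|^{2k}\bigr].
\]
The first term is $O(N^{-k})$ directly from Theorem \ref{thm1} applied to $f$. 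For the second, Cauchy--Schwarz gives
$E[|A'|^{2k}|B'-B|^{2k}]\le (E|A'|^{4k})^{1/2}(E|B'-B|^{4k})^{1/2}$; the last factor is $O(N^{-k})$ from Theorem \ref{thm1} applied to $g$ (with $2k$ in place of $k$), so it suffices to see that $E|A'|^{4k}=O(1)$ uniformly in $N,\beta\le\beta_0,h$.

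The main technical step is precisely this uniform moment bound on $A'$. Conditionally on the disorder $\{g_{ij}\}_{i<j\le N}$, both $r$ and $\xi$ are Gaussian with variances bounded by $1$; thus $A'=E_\xi[f(\xi+r)]$ is, up to a constant, of the form handled by Lemma \ref{lem3} (taking $X^N_{j,\beta}=\langle\sigma_j\rangle$, $f_1(N,t,\beta)=1/\sqrt N$, $f_2=\sqrt{1-q}$, and a composite $U$ bounded by $Ae^{ax^2}$ for $a$ arbitrarily small, which is available because products of a moderate-growth $U$ with $\cosh$ still satisfy such a bound for any $a>0$). The expected obstacle is bookkeeping the growth rates so that $a$ can be chosen small enough for Lemma \ref{lem3} to apply when one further takes the $4k$-th power. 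Assembling everything finishes the proof with a constant $K$ depending only on $\beta_0$, $k$, and $U$.
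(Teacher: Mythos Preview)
Your decomposition is clean and the observation $B=\langle\cosh(\beta l+h)\rangle\ge 1$ is tempting, but it creates a genuine gap in the $h$-uniformity of the constant. When you apply Theorem~\ref{thm1} to $f(x)=U(x)\cosh(\beta x+h)$, the constant $K$ in that theorem depends on the function $f$, and $f$ depends on $h$: since $\cosh(\beta x+h)\sim \tfrac12 e^{|h|}e^{\beta_0|x|}$ as $|h|\to\infty$, the growth constants entering Lemma~\ref{lem3} (and hence the $K$ in Theorem~\ref{thm1}) blow up like $e^{2k|h|}$. Throwing away $B$ via $B\ge 1$ does not recover this factor, because $|A-A'|$ itself is genuinely of order $\cosh(h)$. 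The same issue hits your bound $E|A'|^{4k}=O(1)$: writing $|A'|\le E_\xi|U(\xi+r)|\cosh(\beta(\xi+r)+h)$ and invoking Lemma~\ref{lem3} gives an $A$ in condition~iv) that scales like $e^{|h|}$, so the moment is $O(e^{4k|h|})$, not $O(1)$ uniformly in $h$.

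The paper's proof fixes exactly this by first splitting $\cosh(\beta x+h)=\tfrac12(e^{h}e^{\beta x}+e^{-h}e^{-\beta x})$ and applying Theorem~\ref{thm1} to $U(x)e^{\pm\beta x}$, which carry no $h$. The resulting error terms $A(\pm1)$ and $B(\pm1)$ are then recombined with the denominator using the elementary inequality
\[
\frac{|a\,e^{h}+b\,e^{-h}|}{c\,e^{h}+d\,e^{-h}}\le \frac{|a|}{c}+\frac{|b|}{d}\qquad(c,d>0),
\]
so that the $e^{\pm h}$ factors cancel before one takes expectations. The price is that the denominators $\langle e^{\pm\beta l}\rangle$ are no longer $\ge 1$, so the paper controls their negative moments via Jensen's inequality (equation~(\ref{prop1:eq1})). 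Your ratio identity and the Gaussian computation of $E_\xi g(\xi+r)$ are correct and are also used in the paper; only the handling of the $h$-dependence needs to be redone along these lines.
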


\begin{proof}
Define for $\varepsilon=\pm1,$
$$
A(\varepsilon)=\left<U(l)\exp\left(\varepsilon \beta l\right)\right>-E_\xi\left[U(\xi+r)\exp\left(\varepsilon \beta(\xi+r)\right)\right]
$$
and also
$$
B(\varepsilon)=\left<\exp\left(\varepsilon \beta l\right)\right>-E_\xi\left[\exp\left(\varepsilon \beta(\xi+r)\right)\right].
$$
Then $EA(\varepsilon)^{4k}\leq K/N^{2k}$ and $EB(\varepsilon)^{8k}\leq K/N^{4k}$ by applying the known result (\ref{Intro:eq3}), Lemma $\ref{lem3}$,
and H\"{o}lder's inequality
to Theorem $\ref{thm1}.$ Now by applying Jensen's inequality, H\"{o}lder's inequality, and Lemma $\ref{lem3},$
it implies that
\begin{align}\label{prop1:eq1}
E\left[\frac{1}{\left<\exp\left(\varepsilon\beta l\right)\right>^{4k}}\right]\leq E\left[\exp\left(-4k\varepsilon\beta r\right)\right]\leq K,
\end{align}
\begin{align}\label{prop1:eq2}
E\left[\frac{1}{\exp\left(\varepsilon\beta r\right)^{8k}}\right]=E\left[\exp\left(-8k\varepsilon\beta r\right)\right]\leq K,
\end{align}
and
\begin{align}\label{prop1:eq3}
E\left[U(\xi+r)^{4k}\frac{\exp(4k\varepsilon\beta(\xi+r))}{\left<\exp(\varepsilon\beta l)\right>^{4k}}\right]
\leq E\left[U(\xi+r)^{4k}\exp(4k\varepsilon\beta\xi)\right]\leq K.
\end{align}
For convenience, we set
$$
\begin{array}{rcl}
A&=&\left<U(l)\cosh(\beta l+h)\right>,\\
\\
B&=&\left<\cosh(\beta l+h)\right>,\\
\\
A'&=&E_\xi\left[U(\xi+r)\cosh(\beta(\xi+r)+h)\right],\\
\\
B'&=&\exp\left(\frac{\beta^2}{2}(1-q)\right)\cosh(\beta r+h).
\end{array}
$$
Consequently, by using H\"{o}lder's inequality, $(\ref{prop1:eq1}),(\ref{prop1:eq2}),$ and $(\ref{prop1:eq3}),$ the following three inequalities hold
\begin{align}\label{prop1:eq4}
\begin{split}
E\left[\frac{A-A'}{B}\right]^{2k}
=&E\left[\frac{A(1)e^{h}+A(-1)e^{-h}}{\left<\exp\left(\beta l\right)\right>e^h+\left<\exp\left(-\beta l\right)\right>e^{-h}}\right]^{2k}\\
\leq &2^{2k}\left(E\left[\frac{A(1)}{\left<\exp\left(\beta l\right)\right>}\right]^{2k}+
E\left[\frac{A(-1)}{\left<\exp\left(-\beta l\right)\right>}\right]^{2k}\right)\\
\leq &2^{2k}\left(\left(EA(1)^{4k}\right)^{1/2}\left(E\left[\frac{1}{\left<\exp\left(\beta l\right)\right>^{4k}}\right]\right)^{1/2}\right.\\
&+\left.
\left(EA(-1)^{4k}\right)^{1/2}\left(E\left[\frac{1}{\left<\exp\left(-\beta l\right)\right>^{4k}}\right]\right)^{1/2}\right)\\
\leq& \frac{K}{N^{k}},
\end{split}
\end{align}
\begin{align}\label{prop1:eq6}
\begin{split}
E\left[\frac{B'-B}{B'}\right]^{4k}
\leq &E\left[\frac{B(1)e^{h}+B(-1)e^{-h}}{\exp(\beta r)e^h+\exp(-\beta r)e^{-h}}\right]^{4k}\\
\leq &2^{4k}\left(E\left[\frac{B(1)}{\exp(\beta r)}\right]^{4k}
+E\left[\frac{B(-1)}{\exp(-\beta r)}\right]^{4k}\right)\\
\leq &2^{4k}\left(\left(EB(1)^{8k}\right)^{1/2}\left(E\left[\frac{1}{\exp\left(8k\beta r\right)}\right]\right)^{1/2}\right.\\
&\left.+
\left(EB(-1)^{8k}\right)^{1/2}\left(E\left[\frac{1}{\exp\left(-8k\beta r\right)}\right]\right)^{1/2}\right)\\
\leq& \frac{K}{N^{2k}},
\end{split}
\end{align}
and
\begin{align}\label{prop1:eq5}
\begin{split}
E\left[\frac{A'}{B}\right]^{4k}
=&E\left[\frac{E_\xi\left[U(\xi+r)\left(\exp(\beta(\xi+r))e^h+\exp(-\beta(\xi+r))e^{-h}\right)\right]}{
\left<\exp(\beta l)e^h+\exp(-\beta l)e^{-h}\right>}\right]^{4k}\\
\leq &2^{4k}\left(E\left[U(\xi+r)\frac{\exp(\beta(\xi+r))}{\left<\exp(\beta l)\right>}\right]^{4k}+E\left[
U(\xi+r)\frac{\exp(-\beta(\xi+r))}{\left<\exp(-\beta l)\right>}\right]^{4k}\right)\\
\leq &K.
\end{split}
\end{align}
Finally, by applying $(\ref{prop1:eq4}),$ $(\ref{prop1:eq6})$, and $(\ref{prop1:eq5})$ to the
following inequality
$$
\left|\frac{A}{B}-\frac{A'}{B'}\right|^{2k}\leq 2^{2k}\left(\left|\frac{A-A'}{B}\right|^{2k}+\left|\frac{A'}{B}\right|^{2k}
\left|\frac{B'-B}{B'}\right|^{2k}\right),
$$
and using H\"{o}lder's inequality, the announced result follows.

\end{proof}


Recall that $q$ is defined by $(\ref{Intro:eq2}).$ We also define $q_-$ as the unique solution of
$q_-=E\tanh^2(\beta_-z\sqrt{q_-}+h),$ where $\beta_-=\sqrt{(N-1)/N}\beta$ and $z$ is a standard Gaussian distribution.
Notice that the existence and uniqueness of $q_-$ are always guaranteed since we only consider the high temperature region, that is, $\beta_-<1/2.$
Also, recall the quantity $\gamma_i$ from $(\ref{thm2:prep:eq1}).$

\begin{lemma}\label{lem5}
There is a constant $L>0$ so that
\begin{align}\label{lem5:eq1}
|q-q_-|\leq \frac{L}{N}
\end{align} for every $\beta<1/2$, $h$, and $N.$ Let $\beta_0<1/2$ be fixed. Then for every $1\leq i\leq N$, $\beta\leq \beta_0,$ and $h,$
\begin{align}\label{lem5:eq2}
E\left[\gamma_i
-\frac{1}{\sqrt{N-1}}\sum_{j\leq N,j\neq i}g_{ij}\left<\sigma_j\right>_-\right]^{2k}\leq \frac{K}{N^{k}}
\end{align}
where $K$ is a constant depending only on $\beta_0$ and $k$.
\end{lemma}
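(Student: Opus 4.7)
For inequality $(\ref{lem5:eq1})$, I would exploit the fact that in the high-temperature regime $\beta < 1/2$, the map $F_\beta(x) := E\tanh^2(\beta z\sqrt{x}+h)$ is a strict contraction with Lipschitz constant $\lambda < 1$ that can be taken uniformly in $h$; a direct Gaussian integration by parts shows $|F_\beta'(x)| \leq c\beta^2 < 1$ for some absolute constant $c$. Writing
\[
|q - q_-| \leq |F_\beta(q) - F_\beta(q_-)| + |F_\beta(q_-) - F_{\beta_-}(q_-)|,
\]
the first summand is $\leq \lambda|q-q_-|$, and since $|\beta - \beta_-| = \beta(1-\sqrt{(N-1)/N}) = O(1/N)$ and $\partial_\beta F_\beta$ is uniformly bounded, the second summand is $O(1/N)$. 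Rearranging yields $(\ref{lem5:eq1})$.

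For $(\ref{lem5:eq2})$, I would decompose
\[
\gamma_i - \frac{1}{\sqrt{N-1}}\sum_{j\neq i}g_{ij}\langle\sigma_j\rangle_- = E_1 + E_2 - \beta(1-q)\langle\sigma_i\rangle,
\]
where $E_1 := \bigl(\frac{1}{\sqrt{N}} - \frac{1}{\sqrt{N-1}}\bigr)\sum_{j\neq i}g_{ij}\langle\sigma_j\rangle_-$ and $E_2 := \frac{1}{\sqrt{N}}\sum_{j\neq i}g_{ij}(\langle\sigma_j\rangle - \langle\sigma_j\rangle_-)$. The term $E_1$ is easy: the prefactor is $O(N^{-3/2})$, while $\sum_{j\neq i}g_{ij}\langle\sigma_j\rangle_-$ is, conditional on the cavity disorder, centered Gaussian with variance $\leq N$, giving $E[E_1^{2k}] \leq K/N^{2k}$ by standard Gaussian moment bounds.

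The bulk of the argument consists in proving $E[(E_2 - \beta(1-q)\langle\sigma_i\rangle)^{2k}] \leq K/N^k$. The key observation is that $\langle\sigma_j\rangle_-$ does not depend on any $g_{ij}$ with $j \neq i$, so Gaussian integration by parts in $g_{ij}$ applied to $\langle\sigma_j\rangle - \langle\sigma_j\rangle_-$ only picks up $\partial_{g_{ij}}\langle\sigma_j\rangle = \frac{\beta}{\sqrt{N}}(\langle\sigma_i\rangle - \langle\sigma_i\sigma_j\rangle\langle\sigma_j\rangle)$. Absorbing one factor of $E_2$ at a time into the $2k$-th power via this identity, the diagonal contribution produces $\frac{\beta(N-1)}{N}\langle\sigma_i\rangle - \frac{\beta}{N}\sum_{j\neq i}\langle\sigma_i\sigma_j\rangle\langle\sigma_j\rangle$, which by a replica computation equals $\beta\langle\sigma_i\rangle - \beta\langle\sigma_i^1 R_{1,2}\rangle + O(1/N)$; combined with $(\ref{Intro:eq1})$ and the first part of this lemma, this cancels the counterterm $\beta(1-q)\langle\sigma_i\rangle$ up to an error that is small in $L^{2k}$. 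The off-diagonal contributions involve derivatives of $(E_2 - \beta(1-q)\langle\sigma_i\rangle)^{2k-1}$ and generate factors of $\langle\sigma_j\rangle - \langle\sigma_j\rangle_-$, each of which is small when expanded via the cavity identity $\langle\sigma_j\rangle = \langle\sigma_j\cosh(\beta l_i+h)\rangle_-/\langle\cosh(\beta l_i+h)\rangle_-$.

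The main obstacle is the combinatorial bookkeeping: iterated integration by parts produces a tree of terms, and each leaf must be bounded by $O(N^{-k})$ in $L^1$ using the overlap moment bound $(\ref{Intro:eq1})$, the $T_i, T_{i,j}, T_{i,i}$ estimates $(\ref{Intro:eq3})$, Lemma $\ref{lem3}$, and H\"{o}lder's inequality. The structural ingredients are the same as those driving the proof of Theorem $\ref{thm1}$, so once the main cancellation above is identified, the estimates should follow by the same template.
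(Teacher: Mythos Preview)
The paper's own proof of Lemma~\ref{lem5} is a two-line citation: inequality $(\ref{lem5:eq1})$ is quoted from Lemma~1.7.5 of Talagrand \cite{Talag10}, and inequality $(\ref{lem5:eq2})$ is quoted from the inequalities on page~86 of \cite{Talag10}. So there is no in-paper argument to compare against, and your proposal is effectively a self-contained reconstruction of results the paper imports.

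For $(\ref{lem5:eq1})$ your contraction argument is exactly the standard one and is correct: Gaussian integration by parts gives $|F_\beta'(x)|\le \beta^2<1/4$ uniformly in $h$ and $x$, and $|\beta^2-\beta_-^2|=\beta^2/N$, which yields the $L/N$ bound after rearranging. This is how Talagrand proves it.

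For $(\ref{lem5:eq2})$ your plan is sound in principle---the leading-order cancellation you isolate between $E_2$ and the Onsager term $\beta(1-q)\langle\sigma_i\rangle$ is precisely the mechanism---but the route you propose (iterated Gaussian integration by parts in the $g_{ij}$'s, tracking a tree of off-diagonal terms) is more laborious than what Talagrand does. His argument on page~86 works directly through the cavity identity $\langle\sigma_j\rangle=\langle\sigma_j\cosh(\beta l_i+h)\rangle_-/\langle\cosh(\beta l_i+h)\rangle_-$: one expands $\langle\sigma_j\rangle-\langle\sigma_j\rangle_-$ as a cavity covariance, multiplies by $g_{ij}/\sqrt{N}$, sums, and controls the resulting expression using the CLT for the cavity field (Theorem~\ref{thm0}) together with $(\ref{Intro:eq3})$. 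This avoids the combinatorial explosion you anticipate, because the smallness of each $\langle\sigma_j\rangle-\langle\sigma_j\rangle_-$ is obtained in one step rather than through repeated differentiation. Your approach would eventually close, but the ``tree of terms'' you mention is a genuine obstacle to writing it out cleanly, and you should be aware that the cavity route gives the $2k$-th moment bound with much less bookkeeping.
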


\begin{proof}
The inequality $(\ref{lem5:eq1})$ is from Lemma 1.7.5. \cite{Talag10}, while $(\ref{lem5:eq2})$ follows from
the inequalities on page $86$ of \cite{Talag10}.

\end{proof}


\begin{Proof of theorem} $\mathbf{\ref{thm2}}:$
By symmetry among the sites, it suffices to prove $(\ref{thm2:eq1})$
is true when $i=N.$ Recall from $(\ref{Intro:eq7})$ and $(\ref{thm2:prep:eq1}),$ for simplicity, we set $l=l_N$, $\gamma=\gamma_N.$ We also set
$$
l_-=\frac{1}{\sqrt{N-1}}\sum_{j\leq N-1}g_{Nj}\sigma_j,\quad r_-=\left<l_-\right>_-,
$$
where $\left<\cdot\right>_-$ is the Gibbs measure with Hamiltonian $(\ref{Intro:eq6})$ and inverse temperature $\beta_-=\sqrt{(N-1)/N}\beta.$
From Proposition $\ref{prop1},$ we know
\begin{equation}\label{thm2:pf:eq1}
E\left[\frac{\left<U(l_-)\cosh(\beta_- l_-+h)\right>_-}{\left<\cosh(\beta_- l_-+h)\right>_-}-\frac{E_\xi\left[U(\xi+r_-)
\cosh(\beta_-(\xi+r_-)+h)\right]}{\exp\left(\frac{\beta_-^2}{2}(1-q_-)\right)\cosh(\beta_- r_-+h)}\right]^{2k}\leq \frac{K}{N^k},
\end{equation}
where $K$ is a constant depending on $\beta_0,k,$ and $U$ only.
The goal of the proof is then to prove that $(\ref{thm2:pf:eq1})$ is very close to $(\ref{thm2:eq1}).$ We divide our estimates into several steps.

{\bf Step 1.}
Similar to $(\ref{Intro:eq5})$, from the Gibbs measure, the following identity holds
$$
\left<U(l)\right>=\frac{\left<U(l)\cosh(\beta l+h)\right>_-}{\left<\cosh(\beta l+h)\right>_-}.
$$
Note that $\beta_-l_-=\beta l.$ Therefore,
$$
\frac{\left<U(l_-)\cosh(\beta_- l_-+h)\right>_-}{\left<\cosh(\beta_- l_-+h)\right>_-}=
\frac{\left<U(l_-)\cosh(\beta l+h)\right>_-}{\left<\cosh(\beta l+h)\right>_-},
$$
and this quantity is very close to $\left<U(l)\right>$
in the sense that
\begin{align}\label{thm2:pf:eq2}
\begin{split}
&E\left[\frac{\left<U(l_-)\cosh(\beta_- l_-+h)\right>_-}{\left<\cosh(\beta_- l_-+h)\right>_-}-\left<U(l)\right>
\right]^{2k}\\
=&E\left[\frac{\left<\left(U(l_-)-U(l)\right)\cosh(\beta l+h)\right>_-}{\left<\cosh(\beta l+h)\right>_-}\right]^{2k}\\
\leq &E\left[U(l_-)-U(l)\right]^{2k}\\
\leq &\frac{K}{N^k}.
\end{split}
\end{align}
Indeed, the first inequality is true by the use of Jensen's inequality. The second inequality holds by using
the mean value theorem and $\sqrt{N}/\sqrt{N-1}-1\leq 2/\sqrt{N}$ together to obtain
\begin{align*}
|U(l_-)-U(l)|\leq &|l_--l|\sup_{0\leq t\leq 1}|U'(tl_-+(1-t)l)|\\
\leq& \frac{\sqrt{2}}{\sqrt{N}}|l|\sup_{0\leq t\leq 1}|U'(tl_-+(1-t)l)|
\end{align*}
and then applying Lemma $\ref{lem3}.$

{\bf Step 2.} Since for any $a,b,c\in\Bbb{R},$
\begin{align}\label{thm2:pf:eq3}
\frac{\cosh(c(\xi+a)+h)}{\cosh(ca+h)}=&\frac{e^{c\xi}}{1+e^{-2ca-2h}}+\frac{e^{-c\xi}}{1+e^{2ca+2h}}
\end{align}
and
\begin{align*}
\frac{d}{da}\left(\frac{1}{1+e^{\pm 2(ca+h)}}\right)=\frac{\mp 2c}{(e^{ca+h}+e^{-ca-h})^2},
\end{align*}
we have
$$
\left|\frac{\cosh(c(\xi+a)+h)}{\cosh(ca+h)}-\frac{\cosh(c(\xi+b)+h)}{\cosh(cb+h)}\right|\leq |c||a-b|\cosh (c\xi).
$$
Therefore, by H\"{o}lder's inequality, Lemmas $\ref{lem3}$ and $\ref{lem5},$
\begin{align}\label{thm2:pf:eq4}
\begin{split}
&E\left[E_\xi\left[U(\xi+r_-)\left(\frac{\cosh(\beta_-(\xi+r_-)+h)}{\cosh\left(\beta_-r_-+h\right)}-\frac{
\cosh(\beta_-(\xi+\gamma)+h)}{\cosh\left(\beta_-\gamma+h\right)}\right)\right]\right]^{2k}\\
\leq &\beta_{-}^{2k}E\left[|U(\xi+r_-)|\cosh(\beta_-\xi)|r_--\gamma|\right]^{2k}\\
\leq &\beta_{-}^{2k}\left(E\left[|U(\xi+r_-)|\cosh(\beta_-\xi)\right]^{4k}\right)^{1/2}
\left(E\left[|r_--\gamma|\right]^{4k}\right)^{1/2}\\
\leq &\frac{K}{N^{k}}.
\end{split}
\end{align}

{\bf Step 3.} Similar to the first step, by the mean value theorem, Lemmas $\ref{lem3}$ and $\ref{lem5}$, we have
\begin{align}\label{thm2:pf:eq5}
E\left[E_\xi\left[\left(U(\xi+r_-)-U(\xi+\gamma)\right)\frac{
\cosh(\beta_-(\xi+\gamma)+h)}{\cosh\left(\beta_-\gamma+h\right)}\right]\right]^{2k}\leq \frac{K}{N^k}.
\end{align}

{\bf Step 4.} Let us apply the same trick as the proof for Proposition $\ref{prop1}$ to obtain
\begin{align*}
&E\left[U(\xi+\gamma)\frac{\cosh(\beta_-(\xi+\gamma)+h)}{\cosh(\beta_-\gamma+h)}\right]^{2k}\\
\leq &E\left[U(\xi+\gamma)^{2k}\left(\frac{\exp(\beta_-(\xi+\gamma))e^h+\exp(-\beta_-(\xi+\gamma))e^{-h}}{\exp(\beta_-\gamma)e^h+\exp(-\beta_-\gamma)e^{-h}}
\right)^{2k}\right]\\
\leq &2^{2k}\left(E\left[U(\xi+\gamma)^{2k}\left(\exp(2k\beta_-\xi)+\exp(-2k\beta_-\xi)\right)\right]\right)\\
\leq &K.
\end{align*}
On the other hand, a straightforward computation gives
\begin{align*}
&\frac{\partial^2}{\partial (\beta^2)\partial q}\exp\left(-\frac{\beta^2}{2}(1-q)\right)\\
=&\frac{1}{2}\exp\left(-\frac{\beta^2}{2}(1-q)\right)
-\frac{(1-q)\beta^2}{4}\exp\left(-\frac{\beta^2}{2}(1-q)\right).
\end{align*}
Thus, from Lemma $\ref{lem5},$
\begin{align*}
&\left|\exp\left(-\frac{\beta_-^2}{2}(1-q_-)\right)-\exp\left(-\frac{\beta^2}{2}(1-q)\right)\right|\\
\leq &\left(\frac{1}{2}+\frac{1}{4}\beta^2\right)
|\beta^2-\beta_-^2||q-q_-|\\
\leq &\frac{K}{N^2}
\end{align*}
and so by Jensen's inequality
\begin{align}\label{thm2:pf:eq6}
\begin{split}
&E\left[\left(\exp\left(-\frac{\beta_-^2}{2}(1-q_-)\right)-\exp\left(-\frac{\beta^2}{2}(1-q)\right)\right)\right.\\
&\qquad\qquad\qquad\qquad\times\left.E_\xi\left[U(\xi+\gamma)\frac{
\cosh(\beta_-(\xi+\gamma)+h)}{\cosh\left(\beta_- \gamma+h\right)}\right]\right]^{2k}
\leq \frac{K}{N^{k}}.
\end{split}
\end{align}

{\bf Step 5.}
Notice that from $(\ref{thm2:pf:eq3}),$ we obtain
\begin{align*}
\frac{d}{d c}\frac{\cosh(c(\xi+a)+h)}{\cosh(ca+h)}=&\frac{\xi e^{c\xi}}{1+e^{-2(ca+h)}}+\frac{-\xi e^{-c\xi}}{1+e^{2(ca+h)}}\\
&+\frac{-2ae^{c\xi-2(ca+h)}}{(1+e^{-2(ca+h)})^2}+\frac{2ae^{-c\xi+2(ca+h)}}{(1+e^{2(ca+h)})^2}.
\end{align*}
Since
\begin{align*}
\left|\frac{2ae^{\pm c\xi\mp 2(ca+h)}}{(1+e^{\mp2(ca+h)})^2}\right|=\left|\frac{e^{\mp 2(ca+h)}}{1+e^{\mp 2(ca+h)}}\right|\left|
\frac{2ae^{\pm c\xi}}{1+e^{\mp 2(ca+h)}}\right|\leq 2|a|e^{\pm c\xi},
\end{align*}
it follows that
$$
\left|\frac{d}{d c}\frac{\cosh(c(\xi+a)+h)}{\cosh(ca+h)}\right|\leq 2\left(|\xi|+2|a|\right)\cosh(c\xi)
$$
and for $c'<c,$
\begin{align}\label{thm2:pf:eq9}
\left|\frac{\cosh(c(\xi+a)+h)}{\cosh(ca+h)}-\frac{\cosh(c'(\xi+a)+h)}{\cosh(c'a+h)}\right|\leq 2\left(|\xi|+2|a|\right)\int_{c'}^c
\cosh(t\xi)dt.
\end{align}
Since from Lemma $\ref{lem3},$ we know
\begin{align*}
E\left[|U(\xi+\gamma)|(|\xi|+2|\gamma|)\sup_{0\leq t\leq \beta_0}\cosh(t\xi)\right]^{2k}\leq K,
\end{align*}
it follows, by $(\ref{thm2:pf:eq9})$ and Jensen's inequality, that
\begin{align}\label{thm2:pf:eq8}
E\left[E_\xi\left[U(\xi+\gamma)\left(\frac{
\cosh(\beta_-(\xi+\gamma)+h)}{\cosh\left(\beta_-\gamma+h\right)}-\frac{
\cosh(\beta(\xi+\gamma)+h)}{\cosh\left(\beta \gamma+h\right)}\right)\right]\right]^{2k}
\leq \frac{K}{N^{k}}.
\end{align}

{\bf Step 6.}
Combining $(\ref{thm2:pf:eq1})$, $(\ref{thm2:pf:eq2})$, $(\ref{thm2:pf:eq4})$, $(\ref{thm2:pf:eq5})$, $(\ref{thm2:pf:eq6}),$ and $(\ref{thm2:pf:eq8}),$
we finally obtain
\begin{align}\label{thm2:pf:eq7}
E\left[\left<U(l)\right>-\frac{E_\xi\left[U(\xi+\gamma)
\cosh(\beta(\xi+\gamma)+h)\right]}{\exp\left(\frac{\beta^2}{2}(1-q)\right)\cosh(\beta \gamma+h)}\right]^{2k}\leq \frac{K}{N^k}.
\end{align}
Substitute the identity
\begin{align*}
\frac{(x-\gamma)^2}{2(1-q)}\mp(\beta x+h)=&\frac{1}{2(1-q)}(x-(\gamma\pm(1-q)\beta))^2\mp(\beta \gamma+h)-\frac{\beta^2}{2}(1-q)
\end{align*}
in the right-hand side of
\begin{align*}
&E_\xi\left[U(\xi+\gamma)\cosh(\beta(\xi+\gamma)+h)\right]\\
=&\int \frac{U(x)}{\sqrt{2\pi(1-q)}}\frac{e^{\beta x+h}+e^{-\beta x-h}}{2}
\exp\left(-\frac{(x-\gamma)^2}{2(1-q)}\right),
\end{align*}
then $(\ref{thm2:eq1})$ holds by $(\ref{thm2:pf:eq7})$ and we are done.
\end{Proof of theorem}
\newline

\begin{Acknowledgements}
The author would like to thank professor M. Talagrand for sharing his interesting book \cite{Talag10}, which motivated
our study of this paper.
\end{Acknowledgements}



\end{document}